\tikzset{->-/.style={decoration={
			markings,
			mark=at position #1 with {\arrow{>}}},postaction={decorate}}}
\tikzstyle{Boxy}=[fill=black, draw=black, shape=rectangle]
\tikzstyle{doty}=[fill=black, draw=black, shape=circle, tikzit fill=black, tikzit draw=black, tikzit shape=circle]
\tikzstyle{new edge style 0}=[-]
\tikzstyle{Line}=[-, draw={rgb,255: red,128; green,128; blue,128}]
 \newtheorem{theorem}{Theorem}
\newtheorem{remark}[theorem]{Remark}
\newtheorem{example}[theorem]{Example}
\newtheorem{lemma}{Lemma}
\newtheorem{corollary}{Corollary}
\newtheorem{definition}{Definition}
\newtheorem{proposition}{Proposition}
\newtheorem{note}{Note}
\newtheorem{case}{Case}
\def \bd{\begin{definition}}
	\def \ed{\end{definition}}
\def \bt{\begin{theorem}}
	\def \et{\end{theorem}}
\def \bl{\begin{lemma}}
	\def \el{\end{lemma}}
\def \bc{\begin{corollary}}
	\def \ec{\end{corollary}}
\def \be{\begin{equation}}
\def \ee{\end{equation}}
\def \ba{\begin{array}}
	\def \ea{\end{array}}
\def\bp{\begin{proposition}}
	\def\ep{\end{proposition}}
\def \bx{\begin{example}}
	\def \ex{\end{example}}
\def\bxa{\begin{example}\rm}
	\def\exa{\end{example}}
\def \br{\begin{remark}}
	\def \er{\end{remark}}
\def \bdsc{\begin{description}}
	\def \edsc{\end{description}}
\def \bn{\begin{case}}
	\def \en{\end{case}}
\def \bfig{\begin{figure}}
	\def \efig{\end{figure}}
\def \bpic{\begin{picture}}
\def \epic{\end{picture}}
\def \bnn{\begin{note}}
	\def \enn{\end{note}}
\newcommand{\bea}{\begin{eqnarray}}
\newcommand{\eea}{\end{eqnarray}}
\newcommand{\ben}{\begin{eqnarray*}}
	\newcommand{\een}{\end{eqnarray*}}
\newcommand{\NI}{\noindent}
\def\pf{\textbf {Proof: }}
\def\1{1\!\!1}
\def\0{0\!\!0}
\title{Eccentric graph of trees and their Cartesian products}
\author{Anita Arora\footnote{Department of Mathematics, Indian Institute of Science, Bangalore, India (anitaarora@iisc.ac.in).} 
\and Rajiv Mishra\footnote{Department of Mathematics and Statistics, IISER Kolkata, Kolkata, India (rm20rs017@iiserkol.ac.in).}
}
\begin{document}	
\maketitle
\begin{abstract}
Let $G$ be an undirected simple connected graph. We say a vertex $u$ is eccentric to a vertex $v$ in $G$ if $d(u,v)=\max\{d(v,w): w\in V(G)\}$. The eccentric graph, $E(G)$ of $G$ is a graph defined on the same vertex set as of $G$ and two vertices are adjacent if one is eccentric to the other. 
We find the structure and the girth of the eccentric graph of trees
and see that the girth of the eccentric graph of a tree can either be zero, three, or four. Further, we study the structure of the eccentric graph of the Cartesian product of graphs and prove that the girth of the eccentric graph of the Cartesian product of trees can only be zero, three, four or six. Furthermore, we provide a comprehensive classification when the eccentric girth assumes these values. We also give the structure of the eccentric graph of the grid graphs and the Cartesian product of cycles.
Finally, we determine the conditions under which the eccentricity matrix of the Cartesian product of trees becomes invertible.
\end{abstract}

\noindent {\bf Key words:} Eccentric graph; Eccentric girth; Cartesian product, Trees.

\noindent {\bf AMS Subject Classification:} 05C05; 05C12; 05C50; 05C75 

\section{Introduction} 
\label{sec: basicEG}
Let $G$ be a simple undirected graph on $n$ vertices with $m$ edges and $V(G)$ denote the set of vertices in $G$. If two vertices $v,w\in V(G)$ are adjacent, we will write $v\sim_{G} w$.  
The \emph{neighbourhood} of a vertex $v$ in $G$ is defined as $N_G(v)=\{w:v\sim_{G} w\}$. 
If the graph $G$ is connected,
the \emph{distance} $d_G(v, w)$, between two vertices $v$ and $w$ is the length of the shortest path in $G$ connecting
them.  The \emph{distance matrix} of a connected graph $G$, denoted
as $D(G)$, is the $n\times n$ matrix indexed by $V(G)$ whose $(v,w)$th-entry is equal to $d_G(v, w)$. We will only consider simple, undirected graphs on \emph{at least} two vertices in this paper. 

The \emph{eccentricity}, $e_G(v)$, of a vertex $v\in V(G)$ is defined as $$e_G(v)=\max\{d(u,v): u\in V(G)\},$$
we will use $e(v)$ instead of $e_G(v)$ whenever there is no confusion about the underlying graph.
If $d(u,v)=e(v)$, then we will say $u$ is \emph{eccentric} to $v$ and a shortest path between $u$ and $v$ is called an \emph{eccentric path} (starting from $v$). The \emph{diameter} of $G$, $diam(G)$, is the maximum of eccentricities of the vertices in $G$. A \emph{diametrical path} is a longest path among all eccentric paths in the graph $G$.

The \emph{eccentricity matrix} of a connected graph $G$, denoted by $\mathcal{E}_G$, is constructed from the distance matrix $D(G)$, retaining the largest distances in each row and each column, while other elements of the distance matrix are set to zero.
In other words,
$$(\mathcal{E}_G)_{i\,j}=\begin{cases}
    d(u_i,u_j) & \text{ if } d(u_i,u_j)=\min\{e(u_i),e(u_j)\},\\
    0 & \text{ otherwise.}
\end{cases}$$

\bd []\rm 
The \emph{eccentric graph} $E(G)$, of a connected graph $G$ is the simple graph with the vertex set same as that of $G$ and $(v,u)$ is an edge in $E(G)$ if either $v$ is eccentric to $u$ or $u$ is eccentric to $v$. If $u$ is adjacent to $v$ in $E(G)$, we will write $u\sim_{\scriptscriptstyle E(G)} v $.
\ed
{Note that the adjacency matrix of the eccentric graph $E(G)$ is obtained by replacing the non-zero entries in the eccentricity matrix $\mathcal{E}_G$, by 1.}

Recall the girth of a graph $G$ is the length of the shortest cycle present in $G$. {If a graph $G$ has no cycles, we will say that $G$ has girth 0.} We will call the girth of the eccentric graph as \emph{eccentric girth}. {Girth is the dual concept to edge connectivity, in the sense that the girth of a planar graph is the edge connectivity of its dual graph, and vice versa. Calculating the girth of a graph is an important task in graph theory, as it helps us understand the graph's structure and properties.}

The notion of eccentricity matrix was first introduced by Randi$\grave{c}$
as the $D_{max}$-matrix in 2013 \cite{Dmax2013} and subsequently, Wang et al. renamed it as the eccentricity matrix in 2018~\cite{EccMatrix2018}.
The eccentricity matrix of a graph is also called as anti-adjacency matrix in the following sense. The eccentricity matrix is obtained
from the distance matrix by preserving only the largest distances in each row and column; on the other hand, the adjacency matrix is obtained from the
distance matrix by preserving only the smallest non-zero distances in each row and column. Unlike the
adjacency matrix and the distance matrix, the eccentricity matrix  of a connected graph need not be irreducible. The eccentricity matrix of a complete bipartite graph is reducible and the eccentricity matrix of a tree is irreducible~\cite{EccMatrix2018,Mahato2020}. 

Spectra of the eccentricity matrix for some graphs are studied by Mahato et al.~\cite{Mahato2020} and Wang et al.~\cite{EccMatrix2018}, the lower and upper bounds for the $\mathcal{E}$-spectral radius of graphs are also discussed in \cite{EccMatrix2018}. 
{J. Wang et al. studied the non-isomorphic co-spectral graphs with respect to the eccentricity matrix  \cite{wang2022spectral}}.
Eccentricity matrix has interesting applications, its main application is in the field of chemical graph theory ~\cite{Dmax2013,DmaxApplication2013}.

A necessary and sufficient condition for $E(G)$ to be isomorphic to $G$ or the complement of $G$ is given by Akiyama et al.~\cite{EccGraph1985}. Kaspar et al. gave complete structure of the eccentric graph for some well-known graphs like paths and cycles~\cite{PathCycleEG2018}.
A \emph{star graph} $S_{n}$, on $(n+1)$ vertices is a graph with $n$ vertices of degree 1 and one vertex, called the center, of degree $n$. A \emph{double star} $S_{s,t}$, is a graph obtained by adding an edge between the center vertices of two stars, $S_{s}$ and $S_{t}$.
Let $P_n$ denotes the \emph{path} graph on $n$ vertices with the natural labelling $1,2,\ldots,n$. Then, 
\[ E(P_n)=
\begin{cases}
K_n,\text{ the complete graph }& \text{ if }n\leq 3,\\
S_{\frac{n-2}{2},\frac{n-2}{2}},\text{ a double star }& \text{ if }n>3 \text{ and is even},\\
H_{\frac{n-3}{2}}& \text{ if }n>3 \text{ and is odd},
\end{cases}
\]
where $K_t$ denotes the complete graph on $t$ vertices and $H_t$ is a graph obtained by adding $t$ pendant vertices to each of any two of the vertices of a triangle (see \Cref{fig:EGofP8andP9}).
\begin{figure}[h!]
    \centering
    \begin{tikzpicture}[x=0.7 cm,y=0.7 cm]
	\begin{pgfonlayer}{nodelayer}
		\node [style=doty, scale=0.6] (0) at (-5, 2) {};
		\node [style=doty, scale=0.6] (1) at (-3, 2) {};
		\node [style=doty, scale=0.6] (2) at (-6.25, 3.25) {};
		\node [style=doty, scale=0.6] (3) at (-6.75, 2) {};
		\node [style=doty, scale=0.6] (4) at (-6.25, 0.75) {};
		\node [style=doty, scale=0.6] (5) at (-1.75, 3.25) {};
		\node [style=doty, scale=0.6] (6) at (-1.25, 2) {};
		\node [style=doty, scale=0.6] (7) at (-1.75, 0.75) {};
		\node [style=doty, scale=0.6] (8) at (1.25, 2) {};
		\node [style=doty, scale=0.6] (9) at (3, 2) {};
		\node [style=doty, scale=0.6] (10) at (5, 2) {};
		\node [style=doty, scale=0.6] (11) at (6.75, 2) {};
		\node [style=doty, scale=0.6] (12) at (1.75, 3.25) {};
		\node [style=doty, scale=0.6] (13) at (1.75, 0.75) {};
		\node [style=doty, scale=0.6] (14) at (6.25, 3.25) {};
		\node [style=doty, scale=0.6] (15) at (6.25, 0.75) {};
		\node [style=doty, scale=0.6] (16) at (4, 3.5) {};
	\end{pgfonlayer}
	\begin{pgfonlayer}{edgelayer}
		\draw [line width=1pt](0) to (1);
		\draw [line width=1pt](1) to (5);
		\draw [line width=1pt](1) to (6);
		\draw [line width=1pt](1) to (7);
		\draw [line width=1pt](2) to (0);
		\draw [line width=1pt](3) to (0);
		\draw [line width=1pt](4) to (0);
		\draw [line width=1pt](9) to (10);
		\draw [line width=1pt](10) to (11);
		\draw [line width=1pt](9) to (8);
		\draw [line width=1pt](9) to (13);
		\draw [line width=1pt](12) to (9);
		\draw [line width=1pt](9) to (16);
		\draw [line width=1pt](16) to (10);
		\draw [line width=1pt](10) to (14);
		\draw [line width=1pt](10) to (15);
	\end{pgfonlayer}
\end{tikzpicture}
    \caption{Eccentric graph of the path graphs $P_8$,  and $P_9$ ($S_{3,3}$ and $H_3$).}
    \label{fig:EGofP8andP9}
\end{figure}
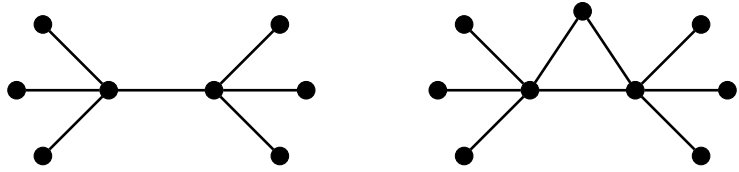

Let $C_n$ denotes the cycle graph on $n$ vertices and the vertices are labeled as $1,2,\ldots,n$. Then,
\begin{equation}
\label{eqn:eccGraphOfCycle}
E(C_n)=
\begin{cases}
\frac{n}{2}K_2 &\text{ if } n \text{ is even,}\\
C_n &\text{ if } n \text{ is odd.}
\end{cases} 
\end{equation}
Also, $E(K_n)=K_n$ and $E(K_{s,t})=K_s\cup K_t$ for $s,t>1$~\cite{PathCycleEG2018}. Throughout the paper, we will use the notation $P_n$ and $C_n$ to denote the path graph and the cycle graph on $n$ vertices.

Numerous interesting properties of the eccentric matrix of a tree have been established so far. For instance, Mahato showed that the eccentric matrix of a tree is invertible only if the tree is a star~\cite{Mahato-2023}. Additionally, the diameter of the tree is odd if and only if eigenvalues of its eccentric matrix are symmetric about the origin~\cite{MahatoSymmetry2022}. 

In \Cref{sec:Structure of $E(T)$}, we will give a complete structure of the eccentric graph of a general tree
and point out one more structural information in \Cref{prop:Ecc_is_smallest_or_largest_in_tree}. In
\Cref{sec:EccGir of a tree}, we will prove that the eccentric girth of a tree can either be zero, three or four.
In \Cref{sec:EccGofCProd}, we will present some structural properties of the eccentric graph of the Cartesian product of graphs and classify all the possible values of the eccentric girth of the Cartesian product of trees.

Lastly, in \cref{sec:Invertibilty of EG}, generalising the result of Mahato~\cite[Theorem 2.1]{Mahato-2023}, we will analyze and classify the conditions under which the eccentricity matrix of the Cartesian product of trees becomes invertible. 

\section{Structure of eccentric graph of a tree}
\label{sec:Structure of $E(T)$}

In this section, we will focus on the structure of the eccentric graph of a tree. Recall that a \emph{tree} is a connected graph with no cycles and the \emph{degree} of a vertex $v$ in a simple graph $G$ is the number of vertices adjacent to it. A vertex of degree $1$ is called a \emph{leaf} or a \emph{pendant} vertex. The \emph{union} of two graphs $G_1$ and $G_2$ is the simple graph whose vertex set and edge set are formed by taking the union of the vertex sets {of $G_1$ and $G_2$} and the edge sets of $G_1$ and $G_2$, respectively.

\bd
Let $T$ be a tree and $v$ be a leaf vertex in $T$. We define the path from $v$ to the nearest vertex of degree greater than two as the \emph{stem} at $v$.
\ed
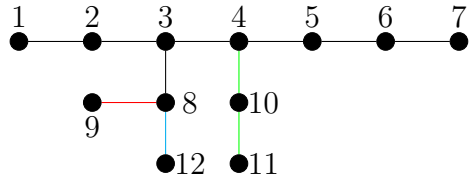
\begin{figure}[H]
    \centering
\begin{tikzpicture}[x=0.65cm, y=0.65cm]
	\begin{pgfonlayer}{nodelayer}
		\node [style=doty, scale=0.6] (0) at (-2, 1) {};
		\node [style=doty, scale=0.6] (1) at (-0.5, 1) {};
		\node [style=doty, scale=0.6] (2) at (1, 1) {};
		\node [style=doty, scale=0.6] (3) at (2.5, 1) {};
		\node [style=doty, scale=0.6] (4) at (4, 1) {};
		\node [style=doty, scale=0.6] (5) at (5.5, 1) {};
		\node [style=doty, scale=0.6] (6) at (7, 1) {};
		\node [style=doty, scale=0.6] (7) at (1, -0.25) {};
		\node [style=doty, scale=0.6] (8) at (-0.5, -0.25) {};
		\node [style=doty, scale=0.6] (9) at (2.5, -0.25) {};
		\node [style=doty, scale=0.6] (10) at (1, -1.5) {};
		\node [style=doty, scale=0.6] (11) at (2.5, -1.5) {};
		\node [style=none] (12) at (-2, 1.5) {1};
		\node [style=none] (13) at (-0.5, 1.5) {2};
		\node [style=none] (14) at (1, 1.5) {3};
		\node [style=none] (15) at (2.5, 1.5) {4};
		\node [style=none] (16) at (4, 1.5) {5};
		\node [style=none] (17) at (5.5, 1.5) {6};
		\node [style=none] (18) at (7, 1.5) {7};
		\node [style=none] (19) at (1.5, -0.25) {8};
		\node [style=none] (20) at (3, -0.25) {10};
		\node [style=none] (21) at (1.5, -1.5) {12};
		\node [style=none] (22) at (3, -1.5) {11};
		\node [style=none] (23) at (-0.5, -0.75) {9};
	\end{pgfonlayer}
	\begin{pgfonlayer}{edgelayer}
		\draw (0) to (1);
		\draw (1) to (2);
		\draw (2) to (3);
		\draw (3) to (4);
		\draw (4) to (5);
		\draw (5) to (6);
		\draw (2) to (7);
		\draw [color=red](7) to (8);
		\draw [color=cyan](7) to (10);
		\draw [color=green](3) to (9);
		\draw [color=green](9) to (11);
	\end{pgfonlayer}
\end{tikzpicture}
\caption{A tree $T$ on 12 vertices with different colored stems at vertices 9, 11 and 12.}
\label{fig:A tree}
\end{figure}

\emph{Note that a path graph $P_n$ has no stems. }
\begin{definition}
\label{def: induced tree from diam path}

    Let $P$ be a diametrical path in a tree $T$. We define the \emph{tree induced from the path $P$} as the subtree of $T$ obtained by removing stems at those leaves 
    {(except endpoints of $P$)}, which are an endpoint of some diametrical path other than $P$.
\end{definition}

Consider the tree $T$ shown in \Cref{fig:A tree}. $T$ has three diametrical paths and the subtrees induced by these are shown in \Cref{fig:3subtreesofT}.
\begin{figure}[H]
\begin{tikzpicture}[x=0.6 cm,y=0.6 cm]
	\begin{pgfonlayer}{nodelayer}
		\node [style=doty, scale=0.6] (0) at (-4.5, 1) {};
		\node [style=doty, scale=0.6] (1) at (-3, 1) {};
		\node [style=doty, scale=0.6] (2) at (-1.5, 1) {};
		\node [style=doty, scale=0.6] (3) at (0, 1) {};
		\node [style=doty, scale=0.6] (4) at (1.5, 1) {};
		\node [style=doty, scale=0.6] (5) at (3, 1) {};
		\node [style=doty, scale=0.6] (6) at (4.5, 1) {};
		\node [style=doty, scale=0.6] (7) at (-1.5, -0.25) {};
		\node [style=doty, scale=0.6] (9) at (0, -0.25) {};
		\node [style=doty, scale=0.6] (11) at (0, -1.5) {};
		\node [style=none, scale=0.8] (12) at (-4.5, 1.5) {1};
		\node [style=none, scale=0.8] (13) at (-3, 1.5) {2};
		\node [style=none, scale=0.8] (14) at (-1.5, 1.5) {3};
		\node [style=none, scale=0.8] (15) at (0, 1.5) {4};
		\node [style=none, scale=0.8] (16) at (1.5, 1.5) {5};
		\node [style=none, scale=0.8] (17) at (3, 1.5) {6};
		\node [style=none, scale=0.8] (18) at (4.5, 1.5) {7};
		\node [style=none, scale=0.8] (19) at (-1, -0.25) {8};
		\node [style=none, scale=0.8] (20) at (0.5, -0.25) {10};
		\node [style=none, scale=0.8] (22) at (0.5, -1.5) {11};
		\node [style=doty, scale=0.6] (26) at (8, 1) {};
		\node [style=doty, scale=0.6] (27) at (9.5, 1) {};
		\node [style=doty, scale=0.6] (28) at (11, 1) {};
		\node [style=doty, scale=0.6] (29) at (12.5, 1) {};
		\node [style=doty, scale=0.6] (30) at (14, 1) {};
		\node [style=doty, scale=0.6] (31) at (8, -0.25) {};
		\node [style=doty, scale=0.6] (32) at (6.5, -0.25) {};
		\node [style=doty, scale=0.6] (33) at (9.5, -0.25) {};
		\node [style=doty, scale=0.6] (35) at (9.5, -1.5) {};
		\node [style=none, scale=0.8] (38) at (8, 1.5) {3};
		\node [style=none, scale=0.8] (39) at (9.5, 1.5) {4};
		\node [style=none, scale=0.8] (40) at (11, 1.5) {5};
		\node [style=none, scale=0.8] (41) at (12.5, 1.5) {6};
		\node [style=none, scale=0.8] (42) at (14, 1.5) {7};
		\node [style=none, scale=0.8] (43) at (8.5, -0.25) {8};
		\node [style=none, scale=0.8] (44) at (10, -0.25) {10};
		\node [style=none, scale=0.8] (46) at (10, -1.5) {11};
		\node [style=none, scale=0.8] (47) at (6.5, -0.75) {9};
		\node [style=doty, scale=0.6] (50) at (16.5, 1) {};
		\node [style=doty, scale=0.6] (51) at (18, 1) {};
		\node [style=doty, scale=0.6] (52) at (19.5, 1) {};
		\node [style=doty, scale=0.6] (53) at (21, 1) {};
		\node [style=doty, scale=0.6] (54) at (22.5, 1) {};
		\node [style=doty, scale=0.6] (55) at (16.5, -0.25) {};
		\node [style=doty, scale=0.6] (57) at (18, -0.25) {};
		\node [style=doty, scale=0.6] (58) at (16.5, -1.5) {};
		\node [style=doty, scale=0.6] (59) at (18, -1.5) {};
		\node [style=none, scale=0.8] (62) at (16.5, 1.5) {3};
		\node [style=none, scale=0.8] (63) at (18, 1.5) {4};
		\node [style=none, scale=0.8] (64) at (19.5, 1.5) {5};
		\node [style=none, scale=0.8] (65) at (21, 1.5) {6};
		\node [style=none, scale=0.8] (66) at (22.5, 1.5) {7};
		\node [style=none, scale=0.8] (67) at (17, -0.25) {8};
		\node [style=none, scale=0.8] (68) at (18.5, -0.25) {10};
		\node [style=none, scale=0.8] (69) at (17, -1.5) {12};
		\node [style=none, scale=0.8] (70) at (18.5, -1.5) {11};
	\end{pgfonlayer}
	\begin{pgfonlayer}{edgelayer}
		\draw [dashed, line width=1pt](0) to (1);
		\draw [dashed, line width=1pt](1) to (2);
		\draw  [dashed, line width=1pt](2) to (3);
		\draw  [dashed, line width=1pt](3) to (4);
		\draw  [dashed, line width=1pt](4) to (5);
		\draw  [dashed, line width=1pt](5) to (6);
		\draw (2) to (7);
		\draw (3) to (9);
		\draw (9) to (11);
		\draw  [dashed, line width=1pt](26) to (27);
		\draw  [dashed, line width=1pt](27) to (28);
		\draw  [dashed, line width=1pt](28) to (29);
		\draw  [dashed, line width=1pt](29) to (30);
		\draw  [dashed, line width=1pt](26) to (31);
		\draw  [dashed, line width=1pt](31) to (32);
		\draw (27) to (33);
		\draw (33) to (35);
		\draw  [dashed, line width=1pt](50) to (51);
		\draw  [dashed, line width=1pt](51) to (52);
		\draw  [dashed, line width=1pt](52) to (53);
		\draw  [dashed, line width=1pt](53) to (54);
		\draw  [dashed, line width=1pt](50) to (55);
		\draw  [dashed, line width=1pt](55) to (58);
		\draw (51) to (57);
		\draw (57) to (59);
	\end{pgfonlayer}
\end{tikzpicture}

\caption{Subtrees induced by different diametrical paths (dashed) of the tree in \Cref{fig:A tree}.}
\label{fig:3subtreesofT}
\end{figure}
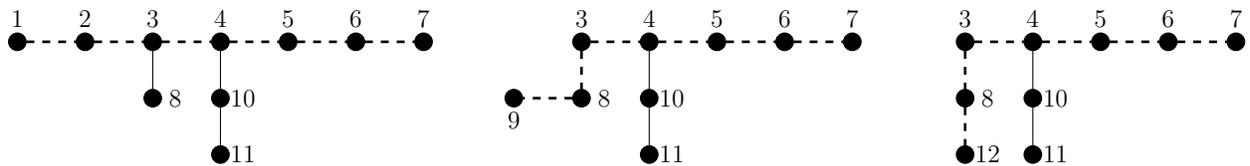

Note that the structure of the eccentric graph of a subtree induced from a diametrical path in $T$ depends on the diameter of $T$. In case of an even diameter, it looks as shown in the left of \Cref{fig:2typeOfDiamIndTrees} and in case of odd diameter, it looks as shown in the right of \Cref{fig:2typeOfDiamIndTrees}.

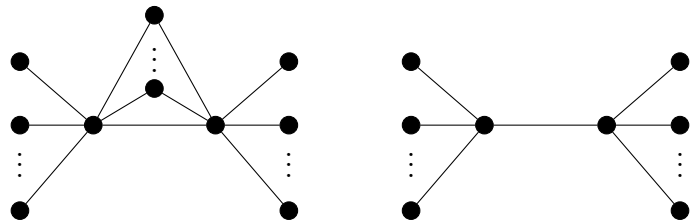
\begin{figure}[H]
\centering
\begin{tikzpicture}[x=0.65 cm,y=0.65 cm]
	\begin{pgfonlayer}{nodelayer}
		\node [style=doty, scale=0.6] (0) at (-5.5, 1) {};
		\node [style=doty, scale=0.6] (1) at (-4, 1) {};
		\node [style=doty, scale=0.6] (2) at (-1.5, 1) {};
		\node [style=doty, scale=0.6] (3) at (0, 1) {};
		\node [style=doty, scale=0.6] (4) at (-5.5, 2.3) {};
		\node [style=doty, scale=0.6] (5) at (0, 2.3) {};
		\node [style=doty, scale=0.6] (6) at (-5.5, -0.75) {};
		\node [style=doty, scale=0.6] (7) at (0, -0.75) {};
		\node [style=doty, scale=0.6] (8) at (2.5, 1) {};
		\node [style=doty, scale=0.6] (9) at (4, 1) {};
		\node [style=doty, scale=0.6] (10) at (6.5, 1) {};
		\node [style=doty, scale=0.6] (11) at (8, 1) {};
		\node [style=doty, scale=0.6] (12) at (2.5, 2.3) {};
		\node [style=doty, scale=0.6] (13) at (8, 2.3) {};
		\node [style=doty, scale=0.6] (14) at (2.5, -0.75) {};
		\node [style=doty, scale=0.6] (15) at (8, -0.75) {};
		\node [style=doty, scale=0.6] (16) at (-2.75, 1.75) {};
		\node [style=doty, scale=0.6] (17) at (-2.75, 3.25) {};
		\node [style=none] (18) at (-2.75, 2.5) {};
		\node [style=none] (19) at (-2.75, 2.5) {$\vdots$};
		\node [style=none] (20) at (-5.5, 0.35) {$\vdots$};
		\node [style=none] (21) at (0, 0.35) {$\vdots$};
		\node [style=none] (22) at (2.5, 0.35) {$\vdots$};
		\node [style=none] (23) at (8, 0.35) {$\vdots$};
	\end{pgfonlayer}
	\begin{pgfonlayer}{edgelayer}
		\draw (0) to (1);
		\draw (1) to (2);
		\draw (2) to (3);
		\draw (4) to (1);
		\draw (5) to (2);
		\draw (1) to (6);
		\draw (2) to (7);
		\draw (8) to (9);
		\draw (9) to (10);
		\draw (10) to (11);
		\draw (12) to (9);
		\draw (13) to (10);
		\draw (9) to (14);
		\draw (10) to (15);
		\draw (17) to (1);
		\draw (17) to (2);
		\draw (16) to (1);
		\draw (16) to (2);
	\end{pgfonlayer}
\end{tikzpicture}

\caption{Eccentric graph of subtrees induced by diametrical paths.}
\label{fig:2typeOfDiamIndTrees}
\end{figure}

{For example, the eccentric graphs of the subtrees in \Cref{fig:3subtreesofT} have been shown in \Cref{fig:EccGraphof3subtreesofT}.}

\begin{figure}[H]
    \centering
    \begin{tikzpicture}[x=0.6 cm, y=0.6 cm]
	\begin{pgfonlayer}{nodelayer}
		\node [style=doty, scale=0.6] (0) at (-7.5, 2) {};
		\node [style=doty, scale=0.6] (1) at (-4.5, 2) {};
		\node [style=doty, scale=0.6] (2) at (-6, 3) {};
		\node [style=doty, scale=0.6] (3) at (-6, 4) {};
		\node [style=doty, scale=0.6] (4) at (-6, 5.25) {};
		\node [style=doty, scale=0.6] (5) at (-9, 3) {};
		\node [style=doty, scale=0.6] (6) at (-9, 1) {};
		\node [style=doty, scale=0.6] (7) at (-3, 3) {};
		\node [style=doty, scale=0.6] (8) at (-3, 1) {};
		\node [style=none] (9) at (-7.5, 1.5) {1};
		\node [style=none] (10) at (-4.5, 1.5) {7};
		\node [style=none] (11) at (-6, 2.5) {};
		\node [style=none] (12) at (-6, 2.5) {};
		\node [style=none] (13) at (-6, 2.5) {4};
		\node [style=none] (14) at (-6, 3.5) {10};
		\node [style=none] (15) at (-6, 4.75) {11};
		\node [style=none] (16) at (-9.5, 3) {5};
		\node [style=none] (17) at (-9.5, 1) {6};
		\node [style=none] (18) at (-2.5, 3) {2};
		\node [style=none] (19) at (-2.5, 1) {8};
		\node [style=doty, scale=0.6] (20) at (0.75, 2) {};
		\node [style=doty, scale=0.6] (21) at (3.75, 2) {};
		\node [style=doty, scale=0.6] (22) at (2.25, 3) {};
		\node [style=doty, scale=0.6] (23) at (2.25, 4) {};
		\node [style=doty, scale=0.6] (24) at (2.25, 5.25) {};
		\node [style=doty, scale=0.6] (25) at (-0.75, 3) {};
		\node [style=doty, scale=0.6] (26) at (-0.75, 1) {};
		\node [style=doty, scale=0.6] (27) at (5.25, 3) {};
		\node [style=doty, scale=0.6] (28) at (5.25, 1) {};
		\node [style=none] (29) at (0.75, 1.5) {9};
		\node [style=none] (30) at (3.75, 1.5) {7};
		\node [style=none] (31) at (2.25, 2.5) {};
		\node [style=none] (32) at (2.25, 2.5) {};
		\node [style=none] (33) at (2.25, 2.5) {4};
		\node [style=none] (34) at (2.25, 3.5) {10};
		\node [style=none] (35) at (2.25, 4.75) {11};
		\node [style=none] (36) at (-1.25, 3) {5};
		\node [style=none] (37) at (-1.25, 1) {6};
		\node [style=none] (38) at (5.75, 3) {3};
		\node [style=none] (39) at (5.75, 1) {8};
		\node [style=doty, scale=0.6] (40) at (9, 2) {};
		\node [style=doty, scale=0.6] (41) at (12, 2) {};
		\node [style=doty, scale=0.6] (42) at (10.5, 3) {};
		\node [style=doty, scale=0.6] (43) at (10.5, 4) {};
		\node [style=doty, scale=0.6] (44) at (10.5, 5.25) {};
		\node [style=doty, scale=0.6] (45) at (7.5, 3) {};
		\node [style=doty, scale=0.6] (46) at (7.5, 1) {};
		\node [style=doty, scale=0.6] (47) at (13.5, 3) {};
		\node [style=doty, scale=0.6] (48) at (13.5, 1) {};
		\node [style=none] (49) at (9, 1.5) {12};
		\node [style=none] (50) at (12, 1.5) {7};
		\node [style=none] (51) at (10.5, 2.5) {};
		\node [style=none] (52) at (10.5, 2.5) {};
		\node [style=none] (53) at (10.5, 2.5) {4};
		\node [style=none] (54) at (10.5, 3.5) {10};
		\node [style=none] (55) at (10.5, 4.75) {11};
		\node [style=none] (56) at (7, 3) {5};
		\node [style=none] (57) at (7, 1) {6};
		\node [style=none] (58) at (14, 3) {3};
		\node [style=none] (59) at (14, 1) {8};
		\node [style=doty, scale=0.6] (60) at (-3, 2) {};
		\node [style=none] (61) at (-2.5, 2) {3};
	\end{pgfonlayer}
	\begin{pgfonlayer}{edgelayer}
		\draw (0) to (1);
		\draw (1) to (7);
		\draw (1) to (8);
		\draw (2) to (1);
		\draw (2) to (0);
		\draw (3) to (1);
		\draw (3) to (0);
		\draw (4) to (1);
		\draw (4) to (0);
		\draw (5) to (0);
		\draw (0) to (6);
		\draw (20) to (21);
		\draw (21) to (27);
		\draw (21) to (28);
		\draw (22) to (21);
		\draw (22) to (20);
		\draw (23) to (21);
		\draw (23) to (20);
		\draw (24) to (21);
		\draw (24) to (20);
		\draw (25) to (20);
		\draw (20) to (26);
		\draw (40) to (41);
		\draw (41) to (47);
		\draw (41) to (48);
		\draw (42) to (41);
		\draw (42) to (40);
		\draw (43) to (41);
		\draw (43) to (40);
		\draw (44) to (41);
		\draw (44) to (40);
		\draw (45) to (40);
		\draw (40) to (46);
		\draw (1) to (60);
	\end{pgfonlayer}
\end{tikzpicture}

    \caption{Eccentric graphs of the three subtrees in \Cref{fig:3subtreesofT} of the tree in \Cref{fig:A tree}.}
    \label{fig:EccGraphof3subtreesofT}
\end{figure}
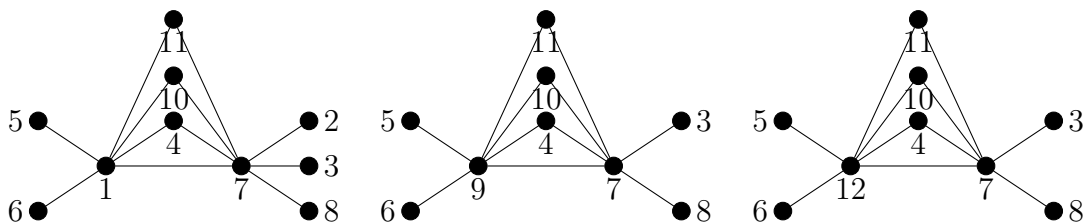
The following result shows that the graphs shown in \Cref{fig:2typeOfDiamIndTrees} are the building blocks for the eccentric graph of a tree.
\begin{theorem}
\label{thm:structureThrm}
    Let $Q_1,\cdots,Q_k$ be possible diametrical paths in $T$ with starting point $v_0^1,\dots,v_0^k$ and ending point $v_n^1,\dots,v_n^k$, respectively. Let $T_1,\dots,T_k$ be induced trees from $Q_1,\dots,Q_k$, repectively. Then, $E(T)=\cup_{i=1}^k E(T_k)$.
\end{theorem}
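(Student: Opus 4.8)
The plan is to reduce the asserted graph equality to two facts about how distances and eccentricities behave under passing to the induced trees $T_i$, and then to prove a single covering statement by an explicit construction of a diametrical path. Throughout write $2r$ for the diameter and $c$ for the center of $T$; the odd-diameter case, where $c$ is replaced by the central edge, is handled by the same argument with cosmetic changes. I treat $Q_1,\dots,Q_k$ as \emph{all} diametrical paths of $T$.

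First I would record the preliminary observations. Each $T_i$ is a connected subtree of $T$ (it is $T$ with some pendant stems deleted), so $d_{T_i}(x,y)=d_T(x,y)$ for $x,y\in V(T_i)$; moreover $Q_i\subseteq T_i$ and $Q_i$ is diametrical in $T$, whence $\mathrm{diam}(T_i)=\mathrm{diam}(T)$ and $T_i$ shares the center $c$ and radius $r$ with $T$. The key structural point is that $Q_i$ is the \emph{unique} diametrical path of $T_i$: if $j\neq i$ then, since distinct diametrical paths have distinct endpoint pairs, $Q_j$ has an endpoint $z\notin\{v_0^i,v_n^i\}$; this $z$ is the endpoint of a diametrical path other than $Q_i$, so its stem (in particular $z$ itself) is deleted in forming $T_i$, and hence $Q_j\not\subseteq T_i$. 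Consequently the only diametrical endpoints of $T_i$ are $v_0^i$ and $v_n^i$.

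Next I would prove the crucial eccentricity-preservation lemma: $e_{T_i}(x)=e_T(x)$ for every $x\in V(T_i)$. Using the elementary tree identity $e_T(x)=d_T(x,c)+r$, one inequality is immediate from $T_i\subseteq T$. For the reverse, note that $v_0^i$ and $v_n^i$ lie in different components of $T-c$ (the path $Q_i$ crosses $c$), so at least one of them, say $v_n^i$, lies in a component not containing $x$; then the $x$--$v_n^i$ geodesic runs through $c$ and $d_T(x,v_n^i)=d_T(x,c)+r=e_T(x)$. Since $v_n^i\in V(T_i)$, this gives $e_{T_i}(x)\ge e_T(x)$. Combined with distance-preservation this yields the clean edge criterion: for $u,v\in V(T_i)$, $\{u,v\}\in E(T_i)$ iff $d_T(u,v)\in\{e_T(u),e_T(v)\}$. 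In particular every edge of every $E(T_i)$ is an edge of $E(T)$, so the inclusion $\bigcup_i E(T_i)\subseteq E(T)$ is already settled.

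The heart of the proof is the reverse inclusion, which by the edge criterion reduces to the covering statement: whenever $\{u,v\}\in E(T)$ there is a single index $i$ with $u,v\in V(T_i)$ (the criterion then places $\{u,v\}$ in $E(T_i)$). Assume without loss of generality that $u$ is eccentric to $v$, so $u$ is a diametrical endpoint and the $u$--$v$ geodesic passes through $c$ with $d_T(u,v)=d_T(v,c)+r$. I would split into two cases according to whether $v$ has a descendant at depth $r$ (rooting at $c$). If it does, say a leaf $w$, then $u\to c\to v\to w$ is a path of length $2r$, hence a diametrical path $Q_i$ through both $u$ and $v$, so $u,v\in V(Q_i)\subseteq V(T_i)$. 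If $v$ has no descendant at depth $r$, then $v$ cannot lie on the stem of any diametrical endpoint (such an endpoint would be a depth-$r$ descendant of $v$), so $v$ survives in \emph{every} $T_i$; choosing any diametrical path ending at $u$ (one exists since $u$ is a diametrical endpoint) gives $u,v\in V(T_i)$. The same dichotomy also gives $\bigcup_i V(T_i)=V(T)$: a vertex on the stem of a diametrical endpoint $z$ lies on a diametrical path ending at $z$, while every other vertex is deleted in no $T_i$. I expect the main obstacle to be precisely this last step---producing a single $Q_i$ (equivalently a single induced tree) that retains both $u$ and $v$ simultaneously---together with the bookkeeping needed to run the eccentricity identity and the depth/center arguments uniformly for even and odd diameter.
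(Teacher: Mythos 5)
Your proof is correct in substance and shares the paper's two-inclusion skeleton, but its key tools are different, and the comparison is worth recording. The paper proves $E(T_i)\subseteq E(T)$ by observing that $Q_i$ is the \emph{unique} diametrical path of $T_i$, so every edge of $E(T_i)$ touches $v_0^i$ or $v_n^i$, and then asserting the chain $e_{T_i}(v)=d_{T_i}(v,v_n^i)=d_T(v,v_n^i)=e_T(v)$; its reverse inclusion is an informal case analysis on stems. You instead prove a genuine eccentricity-preservation lemma, $e_{T_i}(x)=e_T(x)$ for all $x\in V(T_i)$, from the center identity $e_T(x)=d_T(x,c)+r$; this yields the sharper fact that $E(T_i)$ is the \emph{induced} subgraph of $E(T)$ on $V(T_i)$ and reduces the whole theorem to a single covering claim: both endpoints of any edge of $E(T)$ survive in a common $T_i$. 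Your explicit construction of a diametrical path $u\to c\to v\to w$ through both endpoints is more rigorous than the paper's corresponding step; the price is that you must carry the center/radius machinery (and its odd-diameter analogue, which you only gesture at) that the paper never makes explicit.

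Two local repairs are needed. First, in Case 1, when $v=c$ you must choose the depth-$r$ descendant $w$ in a component of $T-c$ not containing $u$ (one exists: take the far endpoint of a diametrical path starting at $u$); otherwise $u\to c\to v\to w$ need not be a path. Second, in Case 2 the justification ``such an endpoint would be a depth-$r$ descendant of $v$'' is not automatic: the center $c$ can lie in the \emph{interior} of the stem at $z$, strictly between $v$ and $z$ (this already happens in the tree obtained from $P_6$ by attaching two pendant vertices to one of its ends), and then $z$ is not a descendant of $v$. The conclusion still holds, but it needs the following extra argument: in that configuration, the far endpoint $z'$ of a diametrical path ending at $z$ is a leaf, hence lies beyond the branch vertex of the stem; the path from $c$ to $z'$ therefore traverses the stem segment containing $v$ (its interior vertices have degree two), and $d_T(c,z')=2r-d_T(c,z)=r$, so $v$ has a depth-$r$ descendant after all, contradicting the Case 2 hypothesis. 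With these two patches your argument, including the vertex covering $\bigcup_i V(T_i)=V(T)$, is complete.
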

\begin{proof}
    It is clear that each vertex of $T$ lies in at least one tree induced from a diametrical path. 

For $i\in [k]$, let $e$ be an edge in the eccentric graph $E(T_i)$. As $Q_i$ is the unique diametrical path in $E(T_i)$, it follows that one of the endpoints of $e$ is either $v_0^i$ or $v_n^i$, assume $e=(v,v_n^i)$. Thus, $e_{T_i}(v)=d_{T_i}(v,v_n^i)=d_{T}(v,v_n^i)=e_{T}(v)$. Thus, $E(T_i)$ is a subgraph of $E(T)$.

Now, let $v\sim_{E(T)} w$; if both $v$ and $w$ lie on the same diametrical path, then done. Otherwise, it is enough to show that $v$ and $w$ both lie on the same tree $T_s$ for some $s\in [k]$. Let $v\in E(T_j)$, eccentric graph induced from the path $Q_j$ for some $j\in [k]$. If $w\notin E(T_j)$, then $w$ lies on a stem at some leaf $z$ in $T$. In that case, either the path joining from $v_0^j$ to $z$ or the path joining from $v_n^j$ to $z$ is a diametrical path. Consequently, either vertex $v$ lie on the tree induced by this diametrical path or both the vertices $v$ and $w$ lie on another diametrical path. In both cases, we get the adjacency relation between $v$ and $w$ in the eccentric graph of a tree induced by some diametrical path.
\end{proof}

The following example illustrates \Cref{thm:structureThrm}.
\begin{example}

\label{exm:structureThm}
    Let $T$ be the tree shown in \Cref{fig:A tree}. The eccentric graph of $T$ (see \Cref{fig:ExmplofStrThm}) is the union of the eccentric graphs (shown in \Cref{fig:EccGraphof3subtreesofT}) of the subtrees (shown in \Cref{fig:3subtreesofT}) induced from the three diametrical paths of $T$.
 \end{example}
\begin{figure}[H]
    \centering
   \begin{tikzpicture}[x=0.6 cm,y=0.6 cm]
	\begin{pgfonlayer}{nodelayer}
		\node [style=doty, scale=0.6] (0) at (-3.75, 4) {};
		\node [style=doty, scale=0.6] (1) at (2, 2) {};
		\node [style=doty, scale=0.6] (2) at (4.5, 3.5) {};
		\node [style=doty, scale=0.6] (3) at (4.5, 2.25) {};
		\node [style=doty, scale=0.6] (4) at (4.5, 1) {};
		\node [style=doty, scale=0.6] (5) at (-6.25, 4.25) {};
		\node [style=doty, scale=0.6] (6) at (-6.5, 2) {};
		\node [style=doty, scale=0.6] (7) at (0.25, 4.5) {};
		\node [style=doty, scale=0.6] (8) at (0.75, 5.75) {};
		\node [style=doty, scale=0.6] (9) at (1.25, 7.25) {};
		\node [style=doty, scale=0.6] (10) at (-3.75, 2.5) {};
		\node [style=doty, scale=0.6] (11) at (-3.75, 0.75) {};
		\node [style=none] (12) at (0.22, 3.85) {4};
		\node [style=none] (13) at (0.7, 5.1) {10};
		\node [style=none] (14) at (1.7, 7.32) {11};
		\node [style=none] (15) at (5, 3.5) {2};
		\node [style=none] (16) at (5, 2.25) {3};
		\node [style=none] (17) at (5, 1) {8};
		\node [style=none] (18) at (2, 1.5) {7};
		\node [style=none] (19) at (-6.75, 4.5) {5};
		\node [style=none] (20) at (-7.25, 1.75) {6};
		\node [style=none] (21) at (-3.75, 3.5) {9};
		\node [style=none] (22) at (-3.75, 2) {1};
		\node [style=none] (23) at (-3.75, 0.25) {12};
	\end{pgfonlayer}
	\begin{pgfonlayer}{edgelayer}
		\draw (0) to (1);
		\draw (1) to (2);
		\draw (1) to (3);
		\draw (1) to (4);
		\draw (1) to (7);
		\draw (7) to (0);
		\draw (8) to (1);
		\draw (8) to (0);
		\draw (9) to (0);
		\draw (9) to (1);
		\draw (5) to (0);
		\draw (0) to (6);
		\draw (1) to (10);
		\draw (7) to (10);
		\draw (8) to (10);
		\draw (9) to (10);
		\draw (5) to (10);
		\draw (6) to (10);
		\draw (5) to (11);
		\draw (6) to (11);
		\draw (11) to (1);
		\draw (7) to (11);
		\draw (8) to (11);
		\draw (9) to (11);
	\end{pgfonlayer}
\end{tikzpicture}
    \caption{Eccentric graph of the tree in \Cref{fig:A tree} which is the union of the graphs in \Cref{fig:EccGraphof3subtreesofT}.}
    \label{fig:ExmplofStrThm}
\end{figure}
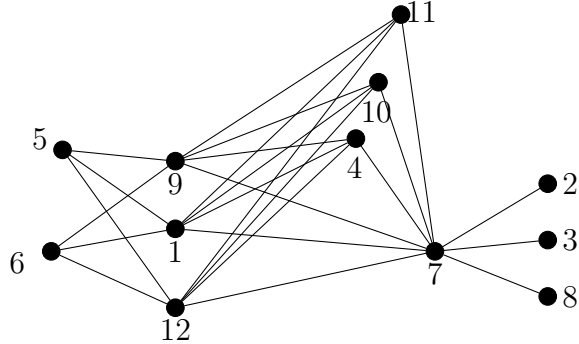
In the remaining part of this section, we will highlight more structural information about the eccentric graph of a tree.

\bp
\label{prop:Ecc_is_smallest_or_largest_in_tree}
Let $T$ be a tree. There does not exist $v_1,v_2,v_3\in V(T)$ such that $v_1\sim_{E(T)} v_2$, $v_2\sim_{E(T)} v_3$ and $e_T(v_1)<e_T(v_2)<e_T(v_3)$.
\ep
\NI \pf
On the contrary, assume such $v_1,v_2,v_3\in V(T)$ exist. Then $d_T(v_1,v_2)=e_T(v_1)$, $d_T(v_2,v_3)=e_T(v_2)$, and $v_2$ and $v_3$ are pendant vertices in $T$. Let $P$ be the path between $v_1$ and $v_2$ in $T$ and $v_k$ be the middle vertex on $P$ if length of $P$ is even else $v_k$ be the right middle vertex on $P$. Now as $v_2$ and $v_3$
 are pendants, $v_3$ cannot lie on the path $P$, moreover $v_3$ will lie on some branch emerging from a vertex $w$ on the path $P$ other than $v_2$. Now two cases arise depending on whether $w$ is positioned to the right of $v_k$ or lies strictly left to $v_k$.
  \begin{figure}[H]
 \centering
    \begin{tikzpicture}[x=0.65cm, y=0.65cm]
	\begin{pgfonlayer}{nodelayer}
		\node [style=doty, scale=0.6] (0) at (-9, 1) {};
		\node [style=doty, scale=0.6] (1) at (-8, 1) {};
		\node [style=doty, scale=0.6] (2) at (-6, 1) {};
		\node [style=doty, scale=0.6] (3) at (-4, 1) {};
		\node [style=doty, scale=0.6] (4) at (-3, 1) {};
		\node [style=doty, scale=0.6] (5) at (-0.5, 1) {};
		\node [style=doty, scale=0.6] (6) at (0.5, 1) {};
		\node [style=doty, scale=0.6] (7) at (2.5, 1) {};
		\node [style=doty, scale=0.6] (8) at (4.5, 1) {};
		\node [style=doty, scale=0.6] (9) at (5.5, 1) {};
		\node [style=doty, scale=0.6] (10) at (-5, 1) {};
		\node [style=doty, scale=0.6] (11) at (1.5, 1) {};
		\node [style=doty, scale=0.6] (12) at (-3.5, 3.5) {};Figure
		\node [style=doty, scale=0.6] (13) at (0, 3.5) {};
		\node [style=none] (14) at (-9, 0.55) {$v_1$};
		\node [style=none] (15) at (-6, 0.55) {$v_k$};
		\node [style=none] (16) at (-5, 0.55) {$w$};
		\node [style=none] (17) at (-3, 0.55) {$v_2$};
		\node [style=none] (18) at (-0.5, 0.55) {$v_1$};
		\node [style=none] (19) at (1.5, 0.55) {$w$};
		\node [style=none] (20) at (2.5, 0.55) {$v_k$};
		\node [style=none] (21) at (5.5, 0.55) {$v_2$};
		\node [style=none] (22) at (-3.5, 3.9) {$v_3$};
		\node [style=none] (23) at (0, 3.9) {$v_3$};
		\node [style=none] (26) at (-6, -0.2) {case $1$};
		\node [style=none] (27) at (2.25, -0.2) {case $2$};
	\end{pgfonlayer}
	\begin{pgfonlayer}{edgelayer}
		\draw [line width=1pt](0) to (1);
		\draw [line width=1pt, dashed](1) to (2);
		\draw [line width=1pt,dashed](2) to (3);
		\draw [line width=1pt](3) to (4);
		\draw [line width=1pt](5) to (6);
		\draw [line width=1pt,dashed](6) to (7);
		\draw [line width=1pt,dashed](7) to (8);
		\draw [line width=1pt](8) to (9);
		\draw [line width=1.2pt, in=90, out=-90, looseness=1.25,dotted] (12) to (10);
		\draw [line width=1.2pt, in=90, out=-90, dotted] (13) to (11);
	\end{pgfonlayer}
\end{tikzpicture}
     \caption{}
     \label{fig:lemm7-1}
 \end{figure}

If $w$ lies on the right to $v_k$ as shown in the left of \Cref{fig:lemm7-1}, then $d_T(w, v_2)\leq d_T(w,v_1)$. 
As $d_T(v_2,v_1)=e_T(v_1)<e_T(v_2)=d_T(v_2,v_3)$, then $d_T(w,v_1)<d_T(w,v_3)$.
\newline Thus,
\begin{align*}
    e_T(v_1)~=~d_T(v_1,v_2)&=d_T(v_1,w)+d_T(w,v_2),\\
    &\leq  d_T(v_1,w)+d_T(v_1,w),\\
    &< d_T(v_1,w)+d_T(w,v_3),\\
    &= d_T(v_1,v_3),
\end{align*}
which is a contradiction.

 If $w$ lies strictly left to $v_k$ as shown in the right of \Cref{fig:lemm7-1}. Since $e_T(v_3)>e_T(v_2)$, there exist a vertex, $w_1$ in $V(T)$ such that  $e_T(v_3)=d_T(v_3,w_1)$. In particular, \begin{equation}
 \label{eqn:case2}
     d_T(v_3,v_2)<d_T(v_3,w_1).
 \end{equation} Note that $w_1$ must lie on some branch emerging from a vertex on $P$ else eccentricity of $v_2$ will increase. This leads to the following two subcases (see \Cref{fig:lemm7-2}):
 \begin{figure}[H]
\centering
   \begin{tikzpicture}[x=0.6cm, y=0.6cm]
	\begin{pgfonlayer}{nodelayer}
		\node [style=doty, scale=0.6] (5) at (-0.5, 1) {};
		\node [style=doty, scale=0.6] (7) at (2.5, 1) {};
		\node [style=doty, scale=0.6] (8) at (4.5, 1) {};
		\node [style=doty, scale=0.6] (9) at (5.5, 1) {};
		\node [style=doty, scale=0.6] (11) at (0.5, 1) {};
		\node [style=doty, scale=0.6] (13) at (-1, 3.5) {};
		\node [style=none] (18) at (-1.5, 0.55) {$v_1$};
		\node [style=none] (19) at (0.5, 0.55) {$w$};
		\node [style=none] (20) at (2.5, 0.55) {$v_k$};
		\node [style=none] (21) at (5.5, 0.55) {$v_2$};
		\node [style=none] (23) at (-1, 4) {$v_3$};
		\node [style=none] (27) at (2.5, -3.55) {case $2(ii)$};
		\node [style=doty, scale=0.6] (28) at (-9, 1) {};
        \node [style=doty, scale=0.6] (49) at (-10, 1) {};
        \node [style=doty, scale=0.6] (50) at (-1.5, 1) {};
		\node [style=doty, scale=0.6] (29) at (-8, 1) {};
		\node [style=doty, scale=0.6] (30) at (-6, 1) {};
		\node [style=doty, scale=0.6] (31) at (-4, 1) {};
		\node [style=doty, scale=0.6] (32) at (-3, 1) {};
		\node [style=doty, scale=0.6] (33) at (-7, 1) {};
		\node [style=doty, scale=0.6] (34) at (-8.5, 3.5) {};
		\node [style=none] (35) at (-10, 0.55) {$v_1$};
		\node [style=none] (36) at (-7, 0.55) {$w$};
		\node [style=none] (37) at (-6, 0.55) {$v_k$};
		\node [style=none] (38) at (-3, 0.55) {$v_2$};
		\node [style=none] (39) at (-8.5, 4) {$v_3$};
		\node [style=none] (41) at (-6.25, -3) {case $2(i)$};
		\node [style=doty, scale=0.6] (42) at (2.25, -1.5) {};
		\node [style=doty, scale=0.6] (43) at (-8.5, -1.5) {};
		\node [style=doty, scale=0.6] (44) at (1.5, 1) {};
		\node [style=none] (45) at (-8, 1.35) {$w_2$};
		\node [style=none] (46) at (1.5, 1.35) {$w_2$};
		\node [style=none] (47) at (-8.5, -2) {$w_1$};
		\node [style=none] (48) at (2.25, -2) {$w_1$};
	\end{pgfonlayer}
	\begin{pgfonlayer}{edgelayer}
		\draw [line width=1pt,dashed](7) to (8);
		\draw [line width=1pt](8) to (9);
		\draw [line width=1.2pt, in=90, out=-90,dotted] (13) to (11);
		\draw [line width=1pt,dashed](28) to (29);
  \draw [line width=1pt](28) to (49);
  \draw [line width=1pt](5) to (50);
		\draw [line width=1pt,dashed](29) to (30);
		\draw [line width=1pt,dashed](30) to (31);
		\draw [line width=1pt](31) to (32);
		\draw [line width=1.2pt, in=90, out=-90,dotted] (34) to (33);
		\draw [line width=1pt, in=45, out=-135, looseness=1.25,red,dashed] (29) to (43);
		\draw [line width=1pt,dashed](5) to (7);
		\draw [line width=1pt, in=135, out=-30, looseness=1.25,red,dashed] (44) to (42);
	\end{pgfonlayer}
\end{tikzpicture}
    \caption{}
    \label{fig:lemm7-2}
\end{figure}

First, $w_1$ lies on a branch emerging from a vertex, $w_2$ which is situated to the left of $w$. By \eqref{eqn:case2}, $d_T(w,v_2)<d_T(w,w_1)$ and $d_T(w,v_2)\geq d_T(w,v_3)$ otherwise $d_T(v_1,v_2)=e_T(v_1)<d_T(v_1, v_3)$. Thus
\begin{align*}
    d_T(v_2,w_1)&=d_T(w,v_2)+d_T(w,w_1)\\
    &> d_T(w, v_3)+d_T(w,v_2),\\
    &= d_T(v_2,v_3),\\
    &=e_T(v_2)
\end{align*}
which is a contradiction.

Second, if $w_2$ is on the right to $w$ as shown in right of \Cref{fig:lemm7-2}. Again by \eqref{eqn:case2}, $d_T(w_2,v_2)<d_T(w_2,w_1)$. Hence, $e_T(v_1)=d_T(v_1,v_2)<d_T(v_1,w_1)$, which is absurd.

\qed

\emph{The essence of Proposition \ref{prop:Ecc_is_smallest_or_largest_in_tree} can be summarized as
the eccentricity of a vertex $v\in V(T)$ is either the smallest or the largest among the eccentricities of its neighbours in the eccentric graph of $T$.}
\section{Eccentric girth of a tree}
\label{sec:EccGir of a tree}

In this section, we will determine the eccentric girth of a tree and its potential values. In addition, we will classify the instances in which these possible values of the eccentric girth can be achieved. It is well-known that two paths of maximum length must pass through a common point. Thus, it is evident that two diametrical paths in a tree must intersect. But this  is not true in general, the graph in \Cref{fig:countExm} has two diametrical paths (dashed) but they do not intersect. 
\begin{figure}[ht]
    \centering
    \begin{tikzpicture}
	\begin{pgfonlayer}{nodelayer}
		\node [style=none] (7) at (-7, 3) {};
		\node [style=none] (8) at (-6, 3) {};
		\node [style=doty, scale=0.5] (9) at (-5, 3) {};
		\node [style=doty, scale=0.5] (10) at (-4, 3) {};
		\node [style=doty, scale=0.5] (11) at (-3, 3) {};
		\node [style=doty, scale=0.5] (12) at (-2, 3) {};
		\node [style=doty, scale=0.5] (14) at (-7.44, 3) {};
		\node [style=doty, scale=0.5] (15) at (-6, 3) {};
		\node [style=doty, scale=0.5] (16) at (-7.44, 4.5) {};
		\node [style=doty, scale=0.5] (17) at (-6, 4.5) {};
		\node [style=doty, scale=0.5] (18) at (-4, 4.5) {};
		\node [style=doty, scale=0.5] (19) at (-3, 4.5) {};
		\node [style=doty, scale=0.5] (20) at (-2, 4.5) {};
		\node [style=doty, scale=0.5] (21) at (-1, 4.5) {};
	\end{pgfonlayer}
	\begin{pgfonlayer}{edgelayer}
		\draw [line width=1pt,dashed](16) to (17);
		\draw [line width=1pt,dashed](17) to (18);
		\draw [line width=1pt,dashed](19) to (20);
		\draw [line width=1pt,dashed](14) to (15);
		\draw [line width=1pt,dashed](15) to (9);
		\draw [line width=1pt,dashed](10) to (11);
		\draw [line width=1pt,dashed](18) to (19);
		\draw [line width=1pt,dashed](20) to (21);
		\draw [line width=1pt,dashed](9) to (10);
		\draw [line width=1pt,dashed](11) to (12);
		\draw [line width=1pt](20) to (12);
		\draw [line width=1pt](19) to (11);
		\draw [line width=1pt](18) to (10);
		\draw [line width=1pt](17) to (15);
		\draw [line width=1pt](16) to (14);
		\draw [line width=1pt](16) to (15);
		\draw [line width=1pt](17) to (14);
	\end{pgfonlayer}
\end{tikzpicture}
\caption{ A graph having two non-intersecting diametrical paths}
    \label{fig:countExm}
\end{figure}
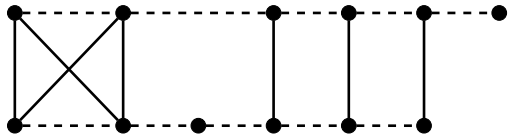

Now, we will present the main result of this section which classifies the eccentric girth of a tree.
\bt \label{Thm: Tree_Girth}
Let $T$ be a tree. Then the girth of the eccentric graph $E(T)$, is either zero, three, or four.
Moreover,
\[
\text{the girth of } E(T)=
\begin{cases}
    3&\text{ if the diameter of $T$ is even,}\\
    0&\text{ if the diameter of $T$ is odd with unique diametrical path,}\\
    4& \text{ otherwise}.
\end{cases}
\]
\et
\noindent\pf
The proof is divided into the following cases depending on the parity of the diameter of $T$.

First, let the diameter of $T$ is even and $P= v_0\,v_1\ldots v_k\,v_{k+1}\ldots v_{2k}$ be a diametrical path. Note that $e(v_0)=2k=e(v_{2k})$ and $d(v_0,v_{2k})=2k$, therefore $v_0\sim_{E(T)} v_{2k}$. 
If $e(v_k)>k$ then one of $e(v_0)$ or $e(v_{2k})$ will be greater than $2k$, which is not possible. Also, $d(v_0,v_k)=k=d(v_k,v_{2k})$, therefore $e(v_k)=k$ and $v_k\sim_{E(T)} v_0$, $v_k\sim_{E(T)} v_{2k}$. Thus, $v_0,v_k,$ and $v_{2k}$ forms a triangle in $E(T)$.

Second, If the diameter of $T$ is odd and $P= v_0\,v_1\ldots v_k\,v_{k+1}\ldots v_{2k+1}$ is the unique diametrical path in $T$. It is sufficient to show that for any vertex $i\in V(T)$ exactly one of $v_0$ or $v_{2k+1}$ is eccentric to $i$ and no other vertex is eccentric to $i$. Note that, in a tree, if a vertex $j$ is eccentric to some vertex then $j$ must be a pendant vertex. 

Let $i\in V(P)$, if possible, there exists a vertex $j\in V(T)$ other than $v_0$ and $v_{2k+1}$ which is eccentric to $i$, that is, $d(i, j)=e(i)$, then $j$ is a leaf of a branch emerging from some vertex $p\in V(P)$. Assume $p$ is on the left of $i$ in $P$ then $d(i,j)\geq d(i, v_0)$ this implies $d(v_{2k+1},j)=d(v_{2k+1},i)+d(i, j)\geq d(v_{2k+1},i)+d(i, v_0)=2k+1$ which contradicts the fact that $P$ is the only diametrical path. A similar argument can be given when $p$ is on the right of $i$. 

Now suppose $i\in V(T)\setminus V(P)$ lying on some branch emerging from a vertex $i'\in V(P)$. Again let there exists $j\in V(T)$ other than $v_0$ and $v_{2k+1}$ which is eccentric to $i$. Note that $j$ cannot lie on the same branch otherwise eccentricity of one of $v_0$ or $v_{2k+1}$ will increase. Thus, $j$ must be eccentric to $i'$ which cannot happen as proved in the preceding paragraph. 
Moreover, because of odd diameter exactly one of  $v_0$ or $ v_{2k+1}$ can be eccentric to $i$. 
For illustration, $E(T)$ in this scenario is shown in \Cref{fig:Ecc_Tree_Zero_Girth}.

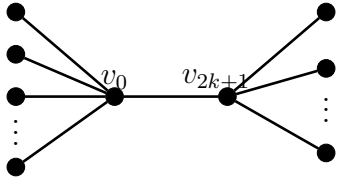
\begin{figure}[h!]
    \centering
   \begin{tikzpicture}[x=0.75 cm, y=0.75 cm]
	\begin{pgfonlayer}{nodelayer}
		\node [style=doty, scale=0.6] (0) at (-1, 1) {};
		\node [style=doty, scale=0.6] (1) at (1, 1) {};
		\node [style=doty, scale=0.6] (2) at (-2.75, 2.5) {};
		\node [style=doty, scale=0.6] (3) at (-2.75, 1.75) {};
		\node [style=doty, scale=0.6] (4) at (-2.75, 1) {};
		\node [style=doty, scale=0.6] (5) at (-2.75, -0.25) {};
		\node [style=doty, scale=0.6] (6) at (2.75, 2.5) {};
		\node [style=doty, scale=0.6] (7) at (2.75, 1.5) {};
		\node [style=doty, scale=0.6] (8) at (2.75, 0) {};
        \node [style=none] (9) at (-1, 1.3) {$v_0$};
        \node [style=none] (9) at (0.8, 1.3) {$v_{2k+1}$};
		\node [style=none] (9) at (-2.75, 0.5) {$\vdots$};
		\node [style=none] (10) at (2.75, 0.9) {$\vdots$};
	\end{pgfonlayer}
	\begin{pgfonlayer}{edgelayer}
		\draw [line width=1](0) to (1);
		\draw [line width=1](1) to (6);
		\draw [line width=1](7) to (1);
		\draw [line width=1](1) to (8);
		\draw [line width=1](0) to (2);
		\draw [line width=1](0) to (3);
		\draw [line width=1](4) to (0);
		\draw [line width=1](0) to (5);
	\end{pgfonlayer}
\end{tikzpicture}

    \caption{Eccentric graph of a tree (of odd diameter) with unique diametrical path}
    \label{fig:Ecc_Tree_Zero_Girth}
\end{figure}

Third, let the diameter of $T$ is odd and $P= v_0\,v_1\ldots v_k\,v_{k+1}\ldots v_{2k+1}$, $P^{\prime}= w_0\,w_1\ldots w_k\allowbreak \,w_{k+1}\ldots w_{2k+1}$ be two diametrical paths in $T$.
As mentioned at the start of \Cref{sec:EccGir of a tree}, they must intersect. Therefore,
it is reasonable to assume
that $P$ and $P'$ have one common endpoint say $v_0=w_0$, otherwise we can create two such diametrical paths.
Hence, $(v_0,v_{2k+1},v_1,w_{2k+1})$ forms a 4-cycle  in $E(T)$. Now, if there is a triangle $(z_1, z_2, z_3)$ in $E(T)$ and $e(z_1)\leq e(z_2)\leq e(z_3)$. Without loss of generality, assume $z_1$ is a vertex on some branch emerging from $w_p$, $1\leq p\leq k$ (Note that $z_1$ can be $w_0$). If $z$ is any vertex eccentric to $z_1$ then $z$ must be a vertex on some branch emerging from $w_i$ for some $k+1\leq i\leq 2k$, if not then $d(z, w_{2k+1})>2k+1$ which is the diameter. Now $z_2$ being eccentric to $z_1$, must lie on some branch emerging from $w_q$, $k+1\leq q \leq 2k$ (Note that $z_2$ can be $w_{2k+1}$). Again, as $z_3$ is eccentric to $z_2$, $z_3$ is a vertex on some branch emerging from $w_r$, $1\leq r \leq k$ but then $z_3$ cannot be eccentric to $z_1$. Hence $E(T)$ cannot have a triangle.
\qed
\section{Eccentric graph of the Cartesian product of graphs}
\label{sec:EccGofCProd}

In this section, We will examine some properties of the eccentric graph of the Cartesian product of general graphs { and calculate the girth of the Cartesian product of trees in \Cref{sec:EccGirthOfCPofTrees}.} We begin by recalling the definition of the \emph{Cartesian product}  and the \emph{Kronecker product} of two graphs.
\bd
\label{def:Cartesian product}
Let $G_1$ and $G_2$ be two simple connected graphs. The \emph{Cartesian product} of $G_1$ and $G_2$ denoted as $G_1\square  G_2$, is a graph with vertex set $V(G_1)\times  V(G_2)$, and two vertices $(u_1,u_2)$ and $(v_1,v_2)$ are adjacent if and only if either $u_1=v_1$ and $u_2 \sim_{G_2} v_2$ 
or $u_1\sim_{G_1}  v_1$ 
and $u_2=v_2$. 
\ed

The following equations follow directly from \Cref{def:Cartesian product}.
 \begin{equation}
\label{eqn:dis is sum of dis}
    d_{G_1\square  G_2}\big((u_1,u_2),(v_1,v_2)\big)=d_{G_1}(u_1,v_1)+d_{G_2}(u_2,v_2),
\end{equation}
and
\begin{equation}
\label{eqn:ecc is sum of ecc}
e_{G_1\square  G_2}\big((u_1,u_2)\big)=e_{G_1}(u_1)+e_{G_2}(u_2).    
\end{equation}

The above definition and the equations can be generalised to the Cartesian product of $k$ graphs $G_1,\dots, G_k$ denoted as $G_1\square
\cdots\square
G_k$.

\bd
\label{def: Kronecker product}
Let $G_1$ and $G_2$ be two simple connected graphs. The \emph{Kronecker product} of $G_1$ and $G_2$ denoted as $G_1\times G_2$, is a graph with vertex set $V(G_1)\times  V(G_2)$, and two vertices $(u_1,u_2)$ and $(v_1,v_2)$ are adjacent if and only if $u_1\sim_{G_1}  v_1$ and $u_2 \sim_{G_2} v_2$.

\ed

\bl \label{lem:eccentricity_in_cartesian_product}
Let $G_1, \ldots, G_k$ be simple connected graphs
and $G=G_1\square \cdots \square G_k$ {be their Cartesian product}. Let $u=(u_1, \ldots, u_k)$, $v=(v_1, \ldots, v_k)\in V(G)$ where $u_i, v_i \in V(G_i)$ for $i\in [k]$. Then, $v$ is eccentric to $u$ if and only if $v_i$ is eccentric to $u_i$ for all $i\in [k]$.
\el
\NI\pf
Let $v$ \text{ is eccentric to } $u$, i.e. $d_G\big( u, v\big)= \max \{d_G\big(u, x\big): x\in V(G)\}$. Then by \eqref{eqn:dis is sum of dis} we can express this as:
\[
\sum\limits_{i=1}^{k} d_{G_i}\big( u_i, v_i\big)=\max \left\{ \sum\limits_{i=1}^{k} d_{G_i}\big(u_i, x_i\big): x_i \in V(G_i)\right\}.
\]
Which holds only if 
\[
d_{G_i}\big( u_i, v_i\big)=\max \{ d_{G_i}\big(u_i, x_i\big): x_i \in V(G_i)\} \text{ for all } i\in [k]\}.
\]
Thus, 
$v_i$ is eccentric to $u_i$ for all $i\in [k]$. Furthermore, we can reverse the steps of this argument to establish the converse part.
\qed

\emph{Note that
if $(u_1,\ldots, u_k)\sim_{E(G_1\square\cdots \square G_k)} (v_1,\ldots , v_k)$ then $u_i \neq v_i$ for all $i\in [k]$. Also, it is clear from \Cref{lem:eccentricity_in_cartesian_product} that if $u \sim_{E(G)} v$ then $u_i \sim_{E(G_i)} v_i$ for all $i\in [k]$ but the converse is not true.} {For example, $1\sim_{E(P_4)} 3$ and $2\sim_{E(P_4)} 4$ but $(1, 2)\nsim_{E(P_4 \square P_4)} (3,4)$ (see \Cref{fig:eccIsNotAdja}).
}

\begin{figure}[h!]
    \centering
 \begin{tikzpicture}[x=0.75 cm, y=0.75 cm]
	\begin{pgfonlayer}{nodelayer}
		\node [style=doty, scale=0.6] (0) at (-6, 14) {};
		\node [style=doty, scale=0.6] (1) at (-4.75, 14) {};
		\node [style=doty, scale=0.6] (2) at (-3.5, 14) {};
		\node [style=doty, scale=0.6] (3) at (-2.25, 14) {};
		\node [style=doty, scale=0.6] (10) at (1.5, 13) {};
		\node [style=doty, scale=0.6] (11) at (2.75, 13) {};
		\node [style=doty, scale=0.6] (12) at (4, 13) {};
		\node [style=doty, scale=0.6] (13) at (5.25, 13) {};
		\node [style=none] (14) at (-6, 14.5) {3};
		\node [style=none] (15) at (-4.75, 14.5) {1};
		\node [style=none] (16) at (-3.5, 14.5) {4};
		\node [style=none] (17) at (-2.25, 14.5) {2};
		\node [style=none] (24) at (0.75, 13) {(3,3)};
		\node [style=none] (25) at (2.75, 12.5) {(1,1)};
		\node [style=none] (26) at (4, 12.5) {(4,4)};
		\node [style=none] (27) at (6, 13) {(2,2)};
		\node [style=none] (28) at (-4.5, 11) {$E(P_4)$};
		\node [style=none] (30) at (3.5, 11) {$E(P_4\square P_4)$};
		\node [style=doty, scale=0.6] (31) at (1.5, 14) {};
		\node [style=doty, scale=0.6] (32) at (1.5, 12) {};
		\node [style=doty, scale=0.6] (33) at (5.25, 14) {};
		\node [style=doty, scale=0.6] (34) at (5.25, 12) {};
		\node [style=none] (35) at (0.75, 14) {(3,4)};
		\node [style=none] (36) at (0.75, 12) {(4,3)};
		\node [style=none] (37) at (6, 14) {(1,2)};
		\node [style=none] (38) at (6, 12) {(2,1)};
		\node [style=doty, scale=0.6] (39) at (1.5, 16.25) {};
		\node [style=doty, scale=0.6] (40) at (2.75, 16.25) {};
		\node [style=doty, scale=0.6] (41) at (4, 16.25) {};
		\node [style=doty, scale=0.6] (42) at (5.25, 16.25) {};
		\node [style=none] (43) at (0.75, 16.25) {(3,2)};
		\node [style=none] (44) at (2.75, 15.75) {(1,4)};
		\node [style=none] (45) at (4, 15.75) {(4,1)};
		\node [style=none] (46) at (6, 16.25) {(2,3)};
		\node [style=doty, scale=0.6] (47) at (1.5, 17.25) {};
		\node [style=doty, scale=0.6] (48) at (1.5, 15.25) {};
		\node [style=doty, scale=0.6] (49) at (5.25, 17.25) {};
		\node [style=doty, scale=0.6] (50) at (5.25, 15.25) {};
		\node [style=none] (51) at (0.75, 17.25) {(3,1)};
		\node [style=none] (52) at (0.75, 15.25) {(4,2)};
		\node [style=none] (53) at (6, 17.25) {(1,3)};
		\node [style=none] (54) at (6, 15.25) {(2,4)};
	\end{pgfonlayer}
	\begin{pgfonlayer}{edgelayer}
		\draw (0) to (1);
		\draw (1) to (2);
		\draw (2) to (3);
		\draw (10) to (11);
		\draw (11) to (12);
		\draw (12) to (13);
		\draw (31) to (11);
		\draw (11) to (32);
		\draw (12) to (33);
		\draw (12) to (34);
		\draw (39) to (40);
		\draw (40) to (41);
		\draw (41) to (42);
		\draw (47) to (40);
		\draw (40) to (48);
		\draw (41) to (49);
		\draw (41) to (50);
	\end{pgfonlayer}
\end{tikzpicture}

    \caption{Eccentric graph of naturally labelled $P_4$ and $P_4\square P_4$.}
    \label{fig:eccIsNotAdja}
\end{figure}
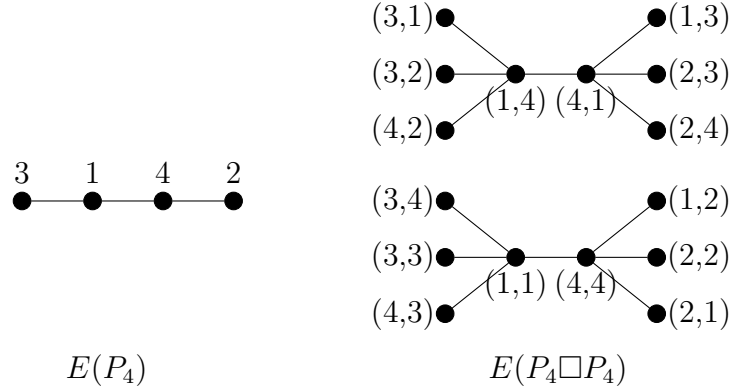

\begin{corollary}\label{cor: sameEccentricityProduct}
    Let $G_1$ and $G_2$ be simple graphs such that all the vertices in both $G_1$ and $G_2$ have the same eccentricities. Then $E(G_1\square G_2)$ is isomorphic to $E(G_1)\times E(G_2)$, the Kronecker product of $E(G_1)$ and $E(G_2)$.
\end{corollary}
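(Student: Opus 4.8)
The plan is to exhibit the identity map on vertices as the isomorphism and verify that it preserves adjacency in both directions, the whole argument hinging on a single observation: in a graph all of whose vertices share a common eccentricity, the ``eccentric to'' relation is \emph{symmetric}. Concretely, suppose every vertex of a graph $H$ has eccentricity $e_H$. If $v$ is eccentric to $u$, then $d_H(u,v)=e_H(u)=e_H=e_H(v)=d_H(v,u)$, so $u$ is eccentric to $v$ as well. Hence for such $H$ we have $u\sim_{E(H)} v$ if and only if $v$ is eccentric to $u$ if and only if $u$ is eccentric to $v$; the disjunction in the definition of the eccentric graph collapses to a single symmetric condition. I would state and prove this observation first, applying it to both $G_1$ and $G_2$, whose vertices are assumed to have constant eccentricities $e_1$ and $e_2$ respectively.

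Since $G_1\square G_2$, $E(G_1)\times E(G_2)$, and $E(G_1\square G_2)$ all share the vertex set $V(G_1)\times V(G_2)$, I would take $\phi$ to be the identity map and show it is a graph isomorphism. For the forward direction, suppose $(u_1,u_2)\sim_{E(G_1\square G_2)}(v_1,v_2)$. Then, without loss of generality, $(v_1,v_2)$ is eccentric to $(u_1,u_2)$, and \Cref{lem:eccentricity_in_cartesian_product} gives that $v_i$ is eccentric to $u_i$ for $i=1,2$; in particular $u_i\sim_{E(G_i)} v_i$ for each $i$, which by \Cref{def: Kronecker product} is exactly the statement that $(u_1,u_2)\sim_{E(G_1)\times E(G_2)}(v_1,v_2)$.

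For the backward direction, suppose $(u_1,u_2)\sim_{E(G_1)\times E(G_2)}(v_1,v_2)$, so that $u_i\sim_{E(G_i)} v_i$ for both $i$. This is the place where the constant-eccentricity hypothesis does the essential work: by the symmetry observation applied in each factor, $u_i\sim_{E(G_i)} v_i$ forces $v_i$ to be eccentric to $u_i$ (a \emph{consistent} direction across both coordinates). \Cref{lem:eccentricity_in_cartesian_product} then yields that $(v_1,v_2)$ is eccentric to $(u_1,u_2)$, whence $(u_1,u_2)\sim_{E(G_1\square G_2)}(v_1,v_2)$.

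I expect the backward direction to be the conceptual crux. The Note preceding this corollary records that in general $u_i\sim_{E(G_i)} v_i$ for all $i$ does \emph{not} imply $u\sim_{E(G)} v$, precisely because the factor-wise adjacencies may certify eccentricity in incompatible directions (e.g.\ $v_1$ eccentric to $u_1$ but $u_2$ eccentric to $v_2$), and \Cref{lem:eccentricity_in_cartesian_product} requires one uniform direction. The equal-eccentricity assumption removes this obstruction by making each factor relation symmetric, so both directions hold simultaneously and no inconsistency can arise; this is exactly what rescues the converse here. Once the symmetry observation is in place, the remaining verifications are routine applications of \Cref{lem:eccentricity_in_cartesian_product} and the definitions, and I would keep them brief.
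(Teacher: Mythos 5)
Your proof is correct and is essentially the argument the paper intends: the corollary is stated without proof as an immediate consequence of \Cref{lem:eccentricity_in_cartesian_product}, and your write-up supplies exactly the missing detail, namely that constant eccentricity makes the ``eccentric to'' relation symmetric in each factor, so the directional obstruction noted before the corollary disappears and the converse direction goes through. Nothing to fix.
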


\bl \label{lem:4Cycle_in_E(G)}
Let $G_1, \ldots, G_k$ be simple connected graphs and $G=G_1\square \cdots \square G_k$. If for some $s, t\in [k]$ there exist $u_s, v_s, w_s\in V(G_s)$ such that $u_s\sim_{E(G_s)} v_s$, $v_s\sim_{E(G_s)} w_s$ and $e_{G_s}(v_s)\geq \max\{e_{G_s}(u_s),e_{G_s}(w_s)\}$, and there exist $u_t, v_t, w_t\in V(G_t)$ such that $u_t\sim_{E(G_t)} v_t$, $v_t\sim_{E(G_t)} w_t$ and $e_{G_t}(v_t)\leq \min\{e_{G_t}(u_t),e_{G_t}(w_t)\}$, then there exists a $4$-cycle in $E(G)$.
 \el

 \NI\pf
 Without loss of generality, assume $s=1$ and $t=2$
 and for $i=3, \ldots, k$, let $\{u_i, v_i\}$ be an edge in $E(G_i)$ such that $e_{G_i}(u_i)\geq e_{G_i}(v_i)$.
Using \Cref{lem:eccentricity_in_cartesian_product}, $a=(u_1, v_2, v_3, \ldots v_k)$, $ b=(v_1, w_2, u_3, \ldots,\allowbreak u_k)$, $ c=(w_1, v_2, v_3, \ldots, v_k)$ and $ d=(v_1, u_2,u_3, \ldots, u_k)$ forms a $4$-cycle in $E(G)$.
 \begin{figure}[H]
\centering
 \begin{tikzpicture}[x=0.6 cm, y=0.6 cm]
	\begin{pgfonlayer}{nodelayer}
		\node [style=doty, scale=0.6] (0) at (1, 2.6) {};
		\node [style=doty, scale=0.6] (1) at (-0.5, 0) {};
		\node [style=doty, scale=0.6] (2) at (2.5, 0) {};
		\node [style=doty, scale=0.6] (3) at (5.25, 0) {};
		\node [style=doty, scale=0.6] (4) at (3.75, 2.6) {};
		\node [style=doty, scale=0.6] (5) at (6.75, 2.6) {};
		\node [style=doty, scale=0.6] (6) at (19.2, 2) {};
		\node [style=doty, scale=0.6] (9) at (21, 4) {};
		\node [style=doty, scale=0.6] (7) at (21, 0) {};
		\node [style=doty, scale=0.6] (8) at (22.8, 2) {};
		\node [style=none] (23) at (-0.5, -0.5) {$u_1$};
		\node [style=none] (24) at (2.5, -0.5) {$w_1$};
		\node [style=none] (25) at (1, 3.17) {$v_1$};
		\node [style=none] (26) at (5.25, -0.5) {$v_2$};
		\node [style=none] (27) at (3.75, 3.2) {$u_2$};
		\node [style=none] (28) at (6.75, 3.2) {$w_2$};
		\node [style=none] (29) at (18.55, 2) {$b$};
		\node [style=none] (39) at (21, 4.7) {$c$};
		\node [style=none] (30) at (21, -0.6) {$a$};
		\node [style=none] (31) at (23.35, 2) {$d$};
		\node [style=none] (32) at (0.9, -2) {\small Inside $E(G_1)$};
		\node [style=none] (33) at (5.15, -2) {\small Inside $E(G_2)$};
		\node [style=none] (35) at (21, -2) {\small Inside $E(G)$};
		\node [style=none] (28) at (9.25, 3.2) {$v_3$};
		\node [style=none] (28) at (14.75, 3.2) {$v_k$};
		\node [style=none] (24) at (9.25, -0.5) {$u_3$};
		\node [style=none] (24) at (14.75, -0.5) {$u_k$};
		\node [style=none] (37) at (17.25, 1.3) {$\implies$};
		\node [style=doty, scale=0.6] (40) at (9.25, 2.6) {};
		\node [style=doty, scale=0.6] (41) at (9.25, 0) {};
		\node [style=doty, scale=0.6] (42) at (14.75, 2.6) {};
		\node [style=doty, scale=0.6] (43) at (14.75, 0) {};
		\node [style=none] (44) at (12, 1.75) {$\ldots$};
		\node [style=none] (45) at (9.25, -2) {\small Inside $E(G_3)$};
		\node [style=none] (46) at (14.8, -2) {\small Inside $E(G_k)$};
		\node [style=none] (47) at (12, -2) {$\ldots$};
	\end{pgfonlayer}
	\begin{pgfonlayer}{edgelayer}
		\draw [line width=1pt] (0) to (2);
		\draw [line width=1pt] (1) to (0);
		\draw [line width=1pt] (3) to (5);
		\draw [line width=1pt] (4) to (3);
		\draw [line width=1pt] (6) to (9);
		\draw [line width=1pt] (8) to (9);
		\draw [line width=1pt] (8) to (7);
		\draw [line width=1pt] (7) to (6);
		\draw [line width=1pt](40) to (41);
		\draw [line width=1pt](42) to (43);
	\end{pgfonlayer}
\end{tikzpicture}
\end{figure}
 \qed

We will now prove that there is a triangle in the eccentric graph of the Cartesian product of $k$ graphs if and only if there is a triangle in the eccentric graph of each of the individual graphs.
\bt \label{Thm:EGirth3_iff_both_EGirth3}  
Let $G_1, \ldots, G_k$ be simple connected graphs and $G
$ be their Cartesian product. Then the girth of $E(G)=3$ if and only if the girth of $E(G_i)=3$  for all $i\in [k]$.
\et
\NI \pf  
First, suppose there is a triangle in $E(G_i)$ for all $i\in [k]$.
Let $\{u_i, v_i, w_i\}$ be a triangle in $E(G_i)$ such that $e_{G_1}(u_i)\leq e_{G_1}(v_i)\leq e_{G_1}(w_i)$  for all $i\in [k]$. Therefore by \Cref{lem:eccentricity_in_cartesian_product}, $(u_1, \ldots, u_k), (v_1, \ldots, v_k), (w_1, \ldots, w_k)$ forms a triangle in $E(G)$.
Conversely, suppose $(u_1, \ldots, u_k), (v_1, \ldots, v_k), (w_1, \ldots, w_k)$ forms a triangle in $E(G)$, then again by \Cref{lem:eccentricity_in_cartesian_product} $\{u_i, v_i, w_i\}$ forms a triangle in $E(G_i)$ for all $i\in [k]$.
\qed

 \bt \label{thm:C_prod_Girth_4_if_both_girths_more_than_4}
Let $G_1, \ldots, G_k$ be simple connected graphs such that the eccentric girth of at least two of them is greater than two.
Let $G=G_1\square \cdots \square G_k$, then the girth of $E(G)$ is four except when the girth of $E(G_i)$ is exactly three for all $i\in[k]$.
\et

\NI\pf
Suppose $E(G_1)$ and $E(G_2)$ have girth greater than two and $C_1$ and $C_2$ are cycles in $E(G_1)$ and $E(G_2)$, respectively. Let
$v_1$ be a vertex of the largest eccentricity on $C_1$ and $v_2$ be a vertex of the smallest eccentricity on $C_2$. In particular, if $u_1,w_1$ are neighbours of $v_1$ in $C_1$ and $ u_2, w_2$ are neighbours of $v_2$ in $C_2$, then 
\[
e_{G_1}(v_1)\geq \max\{e_{G_1}(u_1),e_{G_1}(w_1)\} \,\text{  
 and   }\, e_{G}(v_2)\leq \min\{e_{G_2}(u_2),e_{G_2}(w_2)\}.
\]
Hence, the result follows from \Cref{Thm:EGirth3_iff_both_EGirth3} and \Cref{lem:4Cycle_in_E(G)}.
\qed

Based on the above-stated theorems, it can be concluded that the eccentric girth of the Cartesian product of graphs, in which at least two have non-zero eccentric girth, is either three or four.

\subsection{Eccentric girth of the Cartesian product of trees}
\label{sec:EccGirthOfCPofTrees}
Recall that in \cref{sec:EccGir of a tree}, we observed that the eccentric girth of a tree could either be zero, three or four. Now, we will prove that for the Cartesian product of trees, it can also be six in addition to the above values. We will now characterize completely the eccentric girth of the Cartesian product of trees and present an analogous result to \Cref{Thm: Tree_Girth}.

\bt
Let $T_1, \ldots, T_k$ be trees and $G=T_1\square \cdots \square T_k$. Then

$$\text{ the girth of }E(G)=\begin{cases}
    0 & \text{ if the girth of } E(T_i)=0  \text{ for all } i\in [k],\\
    3 & \text{ if the girth of } E(T_i)=3  \text{ for all } i\in [k],\\
    6 & \text{ if }G=T_1\square P_2\square \cdots \square  P_2 \text{ and } E(T_1) \text{ is } C_4\text{-free with girth three,}\\
    4 & \text{ otherwise.}
\end{cases}$$
\et
\NI \pf
First, assume $T_1,\dots,T_k$ are trees with eccentric girth 0. By \Cref{Thm: Tree_Girth}, there exists a unique diametrical path of odd length in $T_i$ with endpoints $u_i$ and $ v_i$ for all $i\in[k]$. Now consider the set of vertices $S=\{(x_1, \ldots, x_k): x_i\in \{
u_i, v_i\}, i\in[k]\}$ in $ V(G)$, then any vertex $u\in V(G)\setminus S$ is adjacent to exactly one vertex in the eccentric graph $E(G)$ and that neighbour lies in $S$. Also, note that any two vertices in $S$ are adjacent if and only if they differ at each component, therefore $E(G)$ is an acyclic graph with $2^{k-1}$ connected components. 


Second, only one of $T_i's$ say $T_1$ has non-zero eccentric girth. Now there are two cases, one is when at least one of $T_i$,  $i=2,\ldots, k$, is not $P_2$ and the other is $T_i=P_2$ for all $i=2,\ldots, k$. 

If suppose $T_2\neq P_2$,
and since $E(T_2)$ has girth zero, by \Cref{Thm: Tree_Girth} there exists a unique diametrical path with endpoints $u_2$ and $v_2$ 
and $u_2\sim_{E(T_2)} v_2$. 
Now, as $T_2\neq P_2$ and $E(T_2)$ is connected~\cite{EccMatrix2018}, there is a vertex $w_2$, adjacent to either $u_2$ or $v_2$, let's say $v_2\sim_{E(T_2)} w_2$. 
Clearly, $e_{T_2}(v_2)\geq \max \{e_{T_2}(u_2), e_{T_2}(w_2)\}$.
Additionally, as the girth of $E(T_1)$ is nonzero, it is possible to choose $u_1, v_1, w_1\in V(T_1)$ such that $u_1\sim_{E(T_1)} v_1$, $v_1\sim_{E(T_1)} w_1$ and $e_{T_1}(v_1)\leq \min\{e_{T_1}(u_1),e_{T_1}(w_1)\}$. Therefore by \Cref{lem:4Cycle_in_E(G)} and \Cref{Thm:EGirth3_iff_both_EGirth3}, the girth of $E(G)$ is four.

Let $T_i=P_2$ with endpoints $\{u_i, v_i\}$ for $i=2, \ldots, k$. If $E(T_1)$ contains a 4-cycle, $\{u_1,v_1,w_1,x_1\}$, then $\{(u_1,u_2,\ldots, u_k),(v_1,v_2,\ldots, v_k),(w_1,u_2,\ldots, u_k),(x_1,v_2,\ldots, v_k)\}$ forms a $4$-cycle in $E(G)$. Therefore the girth of $E(G)$ is four as $E(G)$ can not contain any odd cycle (because $T_2=P_2$).
If $E(T_1)$ doesn't contain a $4$-cycle, then by \Cref{Thm: Tree_Girth},
girth of $E(T_1)$ is $3$. Let $\{u_1, v_1, w_1\}$ be a $3$-cycle in $E(T_1)$ then $\{(u_1,u_2,\ldots, u_k),(v_1,v_2,\ldots, v_k),(w_1,u_2,\ldots, u_k),\allowbreak(u_1,v_2,\ldots, v_k),(v_1,u_2,\ldots, u_k),(w_1,v_2,\ldots, v_k)\}$ forms a $6$-cycle in $E(G)$. If $E(G)$ contains a $4$-cycle, then so is $E(T_1)$ as $T_i=P_2$ for all $i=2, \ldots, k$.

Finally, the rest of the cases follows from \Cref{Thm:EGirth3_iff_both_EGirth3,thm:C_prod_Girth_4_if_both_girths_more_than_4}.
\qed


As an illustration, We will now discuss the structure and the girth of the eccentric graph of the graphs obtained as the Cartesian product of two path graphs and two cycle graphs.
\subsection{Cartesian product of two path graphs}
An $m\times  n$ \emph{grid graph} is the Cartesian product of the path graphs $P_m$ and $P_n$, denoted as $P_m\square  P_n$. Let the vertices of $P_m\square P_n$ be $\{(i, j): 1\leq i \leq m,\, 1\leq j \leq n\}$. For the sake of simplicity in figures, we label a vertex $(i, j)$ by $(i-1)n+j$. \Cref{fig:grid}
shows the mentioned labelling for the grid graph $P_3\square P_5$.
\begin{figure}[ht]
    \centering
    \begin{tikzpicture}[x=0.65 cm, y=0.65 cm]
	\begin{pgfonlayer}{nodelayer}
		\node [style=doty, scale=0.6] (0) at (-6, 4) {};
		\node [style=doty, scale=0.6] (1) at (-4, 4) {};
		\node [style=doty, scale=0.6] (2) at (-2, 4) {};
		\node [style=doty, scale=0.6] (3) at (0, 4) {};
		\node [style=doty, scale=0.6] (4) at (2, 4) {};
		\node [style=doty, scale=0.6] (5) at (-6, 2) {};
		\node [style=doty, scale=0.6] (6) at (-6, 0) {};
		\node [style=doty, scale=0.6] (7) at (-4, 2) {};
		\node [style=doty, scale=0.6] (8) at (-4, 0) {};
		\node [style=doty, scale=0.6] (9) at (-2, 2) {};
		\node [style=doty, scale=0.6] (10) at (-2, 0) {};
		\node [style=doty, scale=0.6] (11) at (0, 2) {};
		\node [style=doty, scale=0.6] (12) at (0, 0) {};
		\node [style=doty, scale=0.6] (13) at (2, 2) {};
		\node [style=doty, scale=0.6] (14) at (2, 0) {};
		\node [style=none] (15) at (-6, -0.45) {$1$};
		\node [style=none] (16) at (-4, -0.45) {$2$};
		\node [style=none] (17) at (-2, -0.45) {$3$};
		\node [style=none] (18) at (0, -0.45) {$4$};
		\node [style=none] (19) at (2, -0.45) {$5$};
		\node [style=none] (20) at (-6.35, 2) {$6$};
		\node [style=none] (21) at (-4.5, 1.75) {$7$};
		\node [style=none] (22) at (-2.5, 1.75) {$8$};
		\node [style=none] (23) at (-0.5, 1.75) {$9$};
		\node [style=none] (24) at (2.4, 2) {$10$};
		\node [style=none] (25) at (-6, 4.45) {$11$};
		\node [style=none] (26) at (-4, 4.45) {$12$};
		\node [style=none] (27) at (-2, 4.45) {$13$};
		\node [style=none] (28) at (0, 4.45) {$14$};
		\node [style=none] (29) at (2, 4.45) {$15$};
	\end{pgfonlayer}
	\begin{pgfonlayer}{edgelayer}
		\draw[line width=1pt] (0) to (4);
		\draw[line width=1pt] (4) to (14);
		\draw[line width=1pt] (14) to (6);
		\draw[line width=1pt] (6) to (0);
		\draw[line width=1pt] (5) to (13);
		\draw[line width=1pt] (3) to (12);
		\draw[line width=1pt] (2) to (10);
		\draw[line width=1pt] (1) to (8);
	\end{pgfonlayer}
\end{tikzpicture}
    \caption{The grid graph, $P_3\square P_5$ }
    \label{fig:grid}
\end{figure}
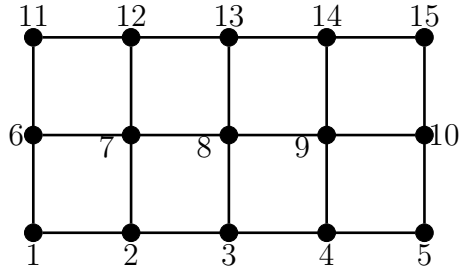

Let $G=P_m\square P_n$ be a grid. Then the eccentricity of the vertices is given by
$$e\big((i,j)\big)=\begin{cases}
d\big((i,j), (m, n)\big) & \text{~if~} 1\leq i\leq \lceil\frac{m}{2}\rceil,\;1\leq j\leq \lceil\frac{n}{2}\rceil,\\
d\big((i,j), (1,1)\big) & \text{~if~} \lfloor \frac{m}{2}\rfloor< i\leq m,\; \lfloor\frac{n}{2}\rfloor< j\leq n,\\
d\big((i,j), (m, 1)\big) & \text{~if~} 1\leq i\leq \lceil\frac{m}{2}\rceil,\; \lfloor\frac{n}{2}\rfloor< j\leq n,\\
d\big((i,j), (1, n)\big) & \text{~if~} \lfloor\frac{m}{2}\rfloor< i\leq m,\;1\leq j\leq \lceil\frac{n}{2}\rceil.
\end{cases}$$
Note that $(1, 1), (1, n), (m, 1)$ and $(m, n)$ have the maximum eccentricity, which is $m+n$. Therefore,

$$(i,j) \sim_{E(G)} \begin{cases}
(m, n) & \text{~if~}  1\leq i\leq \lceil\frac{m}{2}\rceil,\;1\leq j\leq \lceil\frac{n}{2}\rceil,\\
 (1,1) & \text{~if~} \lfloor \frac{m}{2}\rfloor< i\leq m,\; \lfloor\frac{n}{2}\rfloor< j\leq n,\\
(m, 1) & \text{~if~} 1\leq i\leq \lceil\frac{m}{2}\rceil,\; \lfloor\frac{n}{2}\rfloor< j\leq n,\\
(1, n) & \text{~if~} \lfloor\frac{m}{2}\rfloor< i\leq m,\;1\leq j\leq \lceil\frac{n}{2}\rceil.
\end{cases}$$

From the above adjacency relations, it is clear that the eccentric graph of $P_m\square P_n$ has a specific structure depending on the parity of $m$ and $n$. \emph{Further, note that the girth of the eccentric graph $E(P_m\square P_n)$ is zero if both $m$ and $n$ are even, four if exactly one of $m$ and $n$ is even, and three if both $m$ and $n$ are odd.} Illustrations for all three cases are provided in \Cref{fig:Ecc_graphs_of_Grids}.
\begin{figure}[H]
    \centering
    \begin{tikzpicture}[x=0.75 cm, y=0.75 cm]
	\begin{pgfonlayer}{nodelayer}
		\node [style=doty, scale=0.6] (0) at (2, 2.5) {};
		\node [style=doty, scale=0.6] (1) at (6, 2.5) {};
		\node [style=doty, scale=0.6] (2) at (2, 0) {};
		\node [style=doty, scale=0.6] (3) at (6, 0) {};
		\node [style=doty, scale=0.6] (4) at (4, 3.5) {};
		\node [style=doty, scale=0.6] (5) at (4, 4.5) {};
		\node [style=doty, scale=0.6] (6) at (4, 2.5) {};
		\node [style=doty, scale=0.6] (7) at (4, 0) {};
		\node [style=doty, scale=0.6] (8) at (4, -1) {};
		\node [style=doty, scale=0.6] (9) at (4, -2) {};
		\node [style=doty, scale=0.6] (10) at (2, 4.5) {};
		\node [style=doty, scale=0.6] (11) at (1.25, 4.25) {};
		\node [style=doty, scale=0.6] (12) at (0.5, 4) {};
		\node [style=doty, scale=0.6] (13) at (0.25, 3.25) {};
		\node [style=doty, scale=0.6] (14) at (0, 2.5) {};
		\node [style=doty, scale=0.6] (15) at (0, 0) {};
		\node [style=doty, scale=0.6] (16) at (0.25, -0.75) {};
		\node [style=doty, scale=0.6] (17) at (0.5, -1.5) {};
		\node [style=doty, scale=0.6] (18) at (1.25, -1.75) {};
		\node [style=doty, scale=0.6] (19) at (2, -2) {};
		\node [style=doty, scale=0.6] (20) at (6, 4.5) {};
		\node [style=doty, scale=0.6] (21) at (6.75, 4.25) {};
		\node [style=doty, scale=0.6] (22) at (7.5, 4) {};
		\node [style=doty, scale=0.6] (23) at (7.75, 3.25) {};
		\node [style=doty, scale=0.6] (24) at (8, 2.5) {};
		\node [style=doty, scale=0.6] (25) at (8, 0) {};
		\node [style=doty, scale=0.6] (26) at (7.75, -0.75) {};
		\node [style=doty, scale=0.6] (27) at (7.5, -1.5) {};
		\node [style=doty, scale=0.6] (28) at (6.75, -1.75) {};
		\node [style=doty, scale=0.6] (29) at (6, -2) {};
		\node [style=none] (30) at (1.75, 2.25) {};
		\node [style=none] (31) at (1.75, 0.25) {1};
		\node [style=none] (32) at (6.38, 0.25) {25};
		\node [style=none] (33) at (1.75, 2.25) {30};
		\node [style=none] (34) at (6.25, 2.25) {6};
		\node [style=none] (35) at (4, 5) {13};
		\node [style=none] (36) at (4, 4) {14};
		\node [style=none] (40) at (4, 3) {15};
		\node [style=none] (41) at (4, -0.5) {16};
		\node [style=none] (42) at (4, -1.5) {17};
		\node [style=none] (43) at (4, -2.5) {18};
		\node [style=none] (44) at (-0.5, 0) {22};
		\node [style=none] (45) at (-0.25, -1) {23};
		\node [style=none] (46) at (0, -1.75) {24};
		\node [style=none] (47) at (1, -2.25) {28};
		\node [style=none] (48) at (2, -2.5) {29};
		\node [style=none] (49) at (6, 5) {19};
		\node [style=none] (50) at (7, 4.75) {20};
		\node [style=none] (51) at (8, 4.25) {21};
		\node [style=none] (52) at (8.25, 3.5) {26};
		\node [style=none] (53) at (8.5, 2.5) {27};
		\node [style=none] (54) at (-0.5, 2.5) {2};
		\node [style=none] (55) at (-0.25, 3.5) {3};
		\node [style=none] (56) at (0.25, 4.25) {7};
		\node [style=none] (57) at (1, 4.75) {8};
		\node [style=none] (58) at (2, 5) {9};
		\node [style=none] (59) at (8.5, 0) {4};
		\node [style=none] (60) at (8.25, -1) {5};
		\node [style=none] (61) at (8, -1.75) {10};
		\node [style=none] (62) at (7, -2.25) {11};
		\node [style=none] (63) at (6, -2.5) {12};
		\node [style=doty, scale=0.6] (100) at (-9, 2.25) {};
		\node [style=doty, scale=0.6] (102) at (-9, 0.25) {};
		\node [style=doty, scale=0.6] (110) at (-7.5, 3.75) {};
		\node [style=doty, scale=0.6] (111) at (-8.25, 4.25) {};
		\node [style=doty, scale=0.6] (112) at (-9, 4.5) {};
		\node [style=doty, scale=0.6] (113) at (-9.75, 4.25) {};
		\node [style=doty, scale=0.6] (114) at (-10.5, 3.75) {};
		\node [style=doty, scale=0.6] (115) at (-10.5, -1.25) {};
		\node [style=doty, scale=0.6] (116) at (-9.75, -1.75) {};
		\node [style=doty, scale=0.6] (117) at (-9, -2) {};
		\node [style=doty, scale=0.6] (118) at (-8.25, -1.75) {};
		\node [style=doty, scale=0.6] (119) at (-7.5, -1.25) {};
		\node [style=none] (131) at (-9.5, 0.25) {1};
		\node [style=none] (133) at (-9.5, 2.25) {24};
		\node [style=none] (144) at (-11, -1.5) {22};
		\node [style=none] (145) at (-10, -2.25) {23};
		\node [style=none] (146) at (-9, -2.5) {18};
		\node [style=none] (147) at (-8, -2.25) {17};
		\node [style=none] (148) at (-7.1, -1.4) {16};
		\node [style=none] (154) at (-11, 4) {2};
		\node [style=none] (155) at (-10, 4.75) {3};
		\node [style=none] (156) at (-9, 5) {7};
		\node [style=none] (157) at (-8, 4.54) {8};
		\node [style=none] (158) at (-7.1, 4) {9};
		\node [style=doty, scale=0.6] (159) at (-4.25, 2.25) {};
		\node [style=doty, scale=0.6] (160) at (-4.25, 0.25) {};
		\node [style=doty, scale=0.6] (161) at (-2.75, 3.75) {};
		\node [style=doty, scale=0.6] (162) at (-3.5, 4.25) {};
		\node [style=doty, scale=0.6] (163) at (-4.25, 4.5) {};
		\node [style=doty, scale=0.6] (164) at (-5, 4.25) {};
		\node [style=doty, scale=0.6] (165) at (-5.75, 3.75) {};
		\node [style=doty, scale=0.6] (166) at (-5.75, -1.25) {};
		\node [style=doty, scale=0.6] (167) at (-5, -1.75) {};
		\node [style=doty, scale=0.6] (168) at (-4.25, -2) {};
		\node [style=doty, scale=0.6] (169) at (-3.5, -1.75) {};
		\node [style=doty, scale=0.6] (170) at (-2.75, -1.25) {};
		\node [style=none] (171) at (-4.75, 0.25) {6};
		\node [style=none] (172) at (-4.75, 2.25) {19};
		\node [style=none] (173) at (-6.15, -1.4) {13};
		\node [style=none] (174) at (-5.25, -2.25) {14};
		\node [style=none] (175) at (-4.25, -2.5) {15};
		\node [style=none] (176) at (-3.25, -2.25) {20};
		\node [style=none] (177) at (-2.5, -1.75) {21};
		\node [style=none] (178) at (-6.1, 4) {4};
		\node [style=none] (179) at (-5.25, 4.5) {5};
		\node [style=none] (180) at (-4.25, 5) {10};
		\node [style=none] (181) at (-3.25, 4.75) {11};
		\node [style=none] (182) at (-2.25, 4) {12};
		\node [style=none] (183) at (-7, -4) {(a) $E(P_4\square P_6)$};
		\node [style=none] (184) at (4, -4) {(b)  $E(P_5\square P_6)$};
	\end{pgfonlayer}
	\begin{pgfonlayer}{edgelayer}
		\draw (0) to (2);
		\draw (0) to (4);
		\draw (4) to (1);
		\draw (0) to (5);
		\draw (5) to (1);
		\draw (1) to (6);
		\draw (6) to (0);
		\draw (1) to (3);
		\draw (3) to (7);
		\draw (7) to (2);
		\draw (2) to (8);
		\draw (8) to (3);
		\draw (3) to (9);
		\draw (9) to (2);
		\draw (10) to (0);
		\draw (0) to (14);
		\draw (13) to (0);
		\draw (12) to (0);
		\draw (11) to (0);
		\draw (20) to (1);
		\draw (21) to (1);
		\draw (22) to (1);
		\draw (23) to (1);
		\draw (24) to (1);
		\draw (3) to (25);
		\draw (3) to (26);
		\draw (3) to (27);
		\draw (3) to (28);
		\draw (3) to (29);
		\draw (2) to (15);
		\draw (2) to (16);
		\draw (2) to (17);
		\draw (2) to (18);
		\draw (2) to (19);
		\draw (100) to (102);
		\draw (110) to (100);
		\draw (100) to (114);
		\draw (113) to (100);
		\draw (112) to (100);
		\draw (111) to (100);
		\draw (102) to (115);
		\draw (102) to (116);
		\draw (102) to (117);
		\draw (102) to (118);
		\draw (102) to (119);
		\draw (159) to (160);
		\draw (161) to (159);
		\draw (159) to (165);
		\draw (164) to (159);
		\draw (163) to (159);
		\draw (162) to (159);
		\draw (160) to (166);
		\draw (160) to (167);
		\draw (160) to (168);
		\draw (160) to (169);
		\draw (160) to (170);
	\end{pgfonlayer}
\end{tikzpicture}

\vspace{1.0 cm}

\begin{tikzpicture}[x=0.75 cm,y=0.75 cm]
	\begin{pgfonlayer}{nodelayer}
		\node [style=doty, scale=0.6] (35) at (4, 2) {};
		\node [style=doty, scale=0.6] (36) at (6, 2) {};
		\node [style=doty, scale=0.6] (37) at (8, 2) {};
		\node [style=doty, scale=0.6] (38) at (6, 4) {};
		\node [style=doty, scale=0.6] (39) at (6, 0) {};
		\node [style=doty, scale=0.6] (40) at (4, 4) {};
		\node [style=doty, scale=0.6] (41) at (3, 5) {};
		\node [style=doty, scale=0.6] (42) at (2, 6) {};
		\node [style=doty, scale=0.6] (43) at (8, 0) {};
		\node [style=doty, scale=0.6] (44) at (9, -1) {};
		\node [style=doty, scale=0.6] (45) at (10, -2) {};
		\node [style=doty, scale=0.6] (46) at (4, 0) {};
		\node [style=doty, scale=0.6] (47) at (3, -1) {};
		\node [style=doty, scale=0.6] (48) at (8, 4) {};
		\node [style=doty, scale=0.6] (49) at (9, 5) {};
		\node [style=doty, scale=0.6] (50) at (1.5, 3.5) {};
		\node [style=doty, scale=0.6] (51) at (1.5, 2.75) {};
		\node [style=doty, scale=0.6] (52) at (1.5, 2) {};
		\node [style=doty, scale=0.6] (53) at (1.5, 1.25) {};
		\node [style=doty, scale=0.6] (54) at (1.5, 0.5) {};
		\node [style=doty, scale=0.6] (55) at (10.5, 3.5) {};
		\node [style=doty, scale=0.6] (56) at (10.5, 2.75) {};
		\node [style=doty, scale=0.6] (57) at (10.5, 2) {};
		\node [style=doty, scale=0.6] (58) at (10.5, 1.25) {};
		\node [style=doty, scale=0.6] (59) at (10.5, 0.5) {};
		\node [style=doty, scale=0.6] (60) at (4.5, 6.5) {};
		\node [style=doty, scale=0.6] (61) at (5.25, 6.5) {};
		\node [style=doty, scale=0.6] (62) at (6, 6.5) {};
		\node [style=doty, scale=0.6] (63) at (6.75, 6.5) {};
		\node [style=doty, scale=0.6] (64) at (7.5, 6.5) {};
		\node [style=doty, scale=0.6] (65) at (4.5, -2.5) {};
		\node [style=doty, scale=0.6] (66) at (5.25, -2.5) {};
		\node [style=doty, scale=0.6] (67) at (6, -2.5) {};
		\node [style=doty, scale=0.6] (68) at (6.75, -2.5) {};
		\node [style=doty, scale=0.6] (69) at (7.5, -2.5) {};
		\node [style=none] (106) at (4.25, 1.75) {1};
		\node [style=none] (107) at (5.75, 1.75) {18};
		\node [style=none] (108) at (7.75, 1.75) {35};
		\node [style=none] (109) at (5.75, 0.25) {7};
		\node [style=none] (110) at (5.75, 3.75) {29};
		\node [style=none] (111) at (1, 3.5) {26};
		\node [style=none] (112) at (1, 2.75) {27};
		\node [style=none] (113) at (1, 2) {28};
		\node [style=none] (114) at (1, 1.25) {33};
		\node [style=none] (115) at (1, 0.5) {34};
		\node [style=none] (116) at (11, 3.5) {2};
		\node [style=none] (117) at (11, 2.75) {3};
		\node [style=none] (118) at (11, 2) {8};
		\node [style=none] (119) at (11, 1.25) {9};
		\node [style=none] (120) at (11, 0.5) {10};
		\node [style=none] (121) at (4.5, -3) {22};
		\node [style=none] (122) at (5.25, -3) {23};
		\node [style=none] (123) at (6, -3) {24};
		\node [style=none] (124) at (6.75, -3) {30};
		\node [style=none] (125) at (7.5, -3) {31};
		\node [style=none] (126) at (4.5, 7) {5};
		\node [style=none] (127) at (5.25, 7) {6};
		\node [style=none] (128) at (6, 7) {12};
		\node [style=none] (129) at (6.75, 7) {13};
		\node [style=none] (130) at (7.5, 7) {14};
		\node [style=none] (131) at (3.75, -0.25) {25};
		\node [style=none] (132) at (2.75, -1.25) {32};
		\node [style=none] (133) at (8.25, 4.25) {4};
		\node [style=none] (134) at (9.25, 5.25) {11};
		\node [style=none] (135) at (1.75, 6.25) {21};
		\node [style=none] (136) at (2.75, 5.25) {20};
		\node [style=none] (137) at (3.75, 4.25) {19};
		\node [style=none] (138) at (8.25, -0.25) {15};
		\node [style=none] (139) at (9.25, -1.25) {16};
		\node [style=none] (140) at (10.25, -2.25) {17};
        \node [style=none] (141) at (6, -4.3) {(c) $E(P_5\square P_7)$};
	\end{pgfonlayer}
	\begin{pgfonlayer}{edgelayer}
		\draw [bend left=45] (38) to (39);
		\draw [bend left, looseness=1.25] (35) to (37);
		\draw (38) to (39);
		\draw (37) to (35);
		\draw (40) to (38);
		\draw (40) to (35);
		\draw (41) to (35);
		\draw (41) to (38);
		\draw (42) to (38);
		\draw (42) to (35);
		\draw (37) to (43);
		\draw (43) to (39);
		\draw (37) to (44);
		\draw (44) to (39);
		\draw (37) to (45);
		\draw (45) to (39);
		\draw (35) to (46);
		\draw (46) to (39);
		\draw (35) to (47);
		\draw (47) to (39);
		\draw (48) to (38);
		\draw (48) to (37);
		\draw (49) to (38);
		\draw (49) to (37);
		\draw (60) to (38);
		\draw (61) to (38);
		\draw (62) to (38);
		\draw (63) to (38);
		\draw (64) to (38);
		\draw (50) to (35);
		\draw (51) to (35);
		\draw (52) to (35);
		\draw (53) to (35);
		\draw (54) to (35);
		\draw (39) to (65);
		\draw (39) to (66);
		\draw (39) to (67);
		\draw (39) to (68);
		\draw (39) to (69);
		\draw (37) to (55);
		\draw (37) to (56);
		\draw (37) to (57);
		\draw (37) to (58);
		\draw (37) to (59);
	\end{pgfonlayer}
\end{tikzpicture}
    \caption{Eccentric graphs of different grid graphs. }
    \label{fig:Ecc_graphs_of_Grids}
\end{figure}

\subsection{Cartesian product of two cycle graphs}

As discussed in \Cref{sec: basicEG},
$E(C_n)$ is isomorphic to the $\frac{n}{2}$ copies of $K_2$ for an even $n$.
Thus when $n$ and $m$ both are even, each vertex in $E(C_n\square C_m)$ has degree 1. In other words, $E(C_n\square C_m)$ is isomorphic to a graph containing $\frac{nm}{2}$ copies of $K_2$. 

For an even $n$ and an odd $m$, each vertex in $E(C_n)$ and $E(C_m)$ has degree $1$ and $2$ respectively. Therefore, $E(C_n\square C_m)$ is a $2$-regular graph. Consequently, $E(C_n\square C_m)$ is either a cycle or a union of cycles. Moreover, $E(C_n\square C_m)$ consists $\frac{n}{2}$ cycles of length $2m$ namely,
\[
\bigg(
(i,1),\Big(\frac{n}{2}+i,2\Big),
\dots,(i,m),\Big(\frac{n}{2}+i,1\Big),(i,2),\dots, \Big(\frac{n}{2}+i,m\Big)
\bigg)
\]
for $i\in[\frac{n}{2}]$. \Cref{fig:Eccofoldevencycle} shows the eccentric graph of the Cartesian product of $C_4$ and $C_3$.
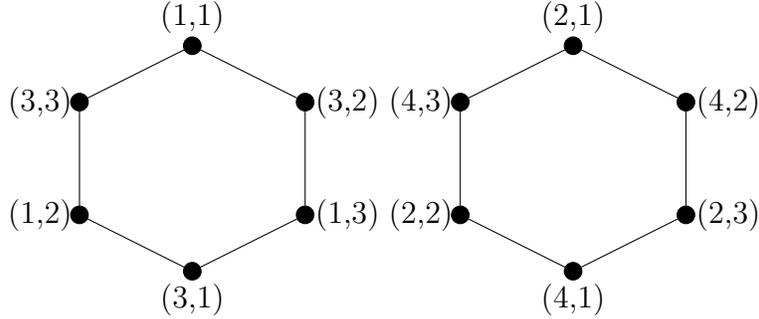
\begin{figure}[H]
    \centering
    \begin{tikzpicture}[x=0.75 cm, y=0.75 cm]
	\begin{pgfonlayer}{nodelayer}
		\node [style=doty, scale=0.6] (0) at (0, 3) {};
		\node [style=doty, scale=0.6] (1) at (2, 2) {};
		\node [style=doty, scale=0.6] (2) at (-2, 2) {};
		\node [style=doty, scale=0.6] (3) at (2, 0) {};
		\node [style=doty, scale=0.6] (4) at (-2, 0) {};
		\node [style=doty, scale=0.6] (5) at (0, -1) {};
		\node [style=none] (6) at (0, 3.5) {(1,1)};
		\node [style=none] (7) at (2.75, 2) {(3,2)};
		\node [style=none] (8) at (2.75, 0) {(1,3)};
		\node [style=none] (9) at (0, -1.5) {(3,1)};
		\node [style=none] (10) at (-2.7, 0) {(1,2)};
		\node [style=none] (11) at (-2.7, 2) {(3,3)};
		\node [style=doty, scale=0.6] (12) at (6.75, 3) {};
		\node [style=doty, scale=0.6] (13) at (8.75, 2) {};
		\node [style=doty, scale=0.6] (14) at (4.75, 2) {};
		\node [style=doty, scale=0.6] (15) at (8.75, 0) {};
		\node [style=doty, scale=0.6] (16) at (4.75, 0) {};
		\node [style=doty, scale=0.6] (17) at (6.75, -1) {};
		\node [style=none] (18) at (6.75, 3.5) {(2,1)};
		\node [style=none] (19) at (9.5, 2) {(4,2)};
		\node [style=none] (20) at (9.5, 0) {(2,3)};
		\node [style=none] (21) at (6.75, -1.5) {(4,1)};
		\node [style=none] (22) at (4.05, 0) {(2,2)};
		\node [style=none] (23) at (4.05, 2) {(4,3)};
	\end{pgfonlayer}
	\begin{pgfonlayer}{edgelayer}
		\draw (0) to (1);
		\draw (1) to (3);
		\draw (3) to (5);
		\draw (5) to (4);
		\draw (4) to (2);
		\draw (2) to (0);
		\draw (12) to (13);
		\draw (13) to (15);
		\draw (15) to (17);
		\draw (17) to (16);
		\draw (16) to (14);
		\draw (14) to (12);
	\end{pgfonlayer}
\end{tikzpicture}
    \caption{{Eccentric graph of the Cartesian product of $C_4$ and $C_3$.}}
    \label{fig:Eccofoldevencycle}
\end{figure}

When $m$ and $n$ both are $3$ the eccentric graph of $C_n\square C_m$ is shown in \Cref{fig:EccC3boxC3} and its girth is $3$ by \Cref{Thm:EGirth3_iff_both_EGirth3}, which can be seen in the figure as well.
\begin{figure}[H]
    \centering
   \begin{tikzpicture}[x=0.7 cm,y=0.7 cm]
	\begin{pgfonlayer}{nodelayer}
		\node [style=doty,  scale=0.6] (18) at (-2, 4) {};
		\node [style=doty,  scale=0.6] (19) at (-2, -2) {};
		\node [style=doty,  scale=0.6] (20) at (-3, 1) {};
		\node [style=doty,  scale=0.6] (21) at (4, 4) {};
		\node [style=doty,  scale=0.6] (22) at (4, -2) {};
		\node [style=doty,  scale=0.6] (23) at (3, 1) {};
		\node [style=doty,  scale=0.6] (24) at (0, 0) {};
		\node [style=doty,  scale=0.6] (25) at (1, -3) {};
		\node [style=doty,  scale=0.6] (26) at (1, 3) {};
		\node [style=none] (27) at (-2.75, 4) {(3,1)};
		\node [style=none] (28) at (-2.75, -1.75) {(2,3)};
		\node [style=none] (29) at (-3.75, 1) {(1,2)};
		\node [style=none] (30) at (2.45, 1.25) {(2,1)};
		\node [style=none] (31) at (-0.75, -0.25) {(3,3)};
		\node [style=none] (32) at (1, -3.5) {(1,1)};
		\node [style=none] (33) at (0.25, 2.75) {(2,2)};
		\node [style=none] (34) at (4.75, 4) {(1,3)};
		\node [style=none] (35) at (4.75, -2) {(3,2)};
	\end{pgfonlayer}
	\begin{pgfonlayer}{edgelayer}
		\draw (18) to (20);
		\draw (20) to (19);
		\draw (19) to (18);
		\draw (21) to (23);
		\draw (23) to (22);
		\draw (21) to (22);
		\draw (18) to (21);
		\draw (21) to (26);
		\draw (26) to (18);
		\draw (19) to (22);
		\draw (22) to (25);
		\draw (25) to (19);
		\draw (20) to (23);
		\draw (23) to (24);
		\draw (24) to (20);
		\draw (26) to (24);
		\draw (24) to (25);
		\draw (25) to (26);
	\end{pgfonlayer}
\end{tikzpicture}

    \caption{Eccentric graph of the Cartesian product of a 3-cycle with itself.}
    \label{fig:EccC3boxC3}
\end{figure}
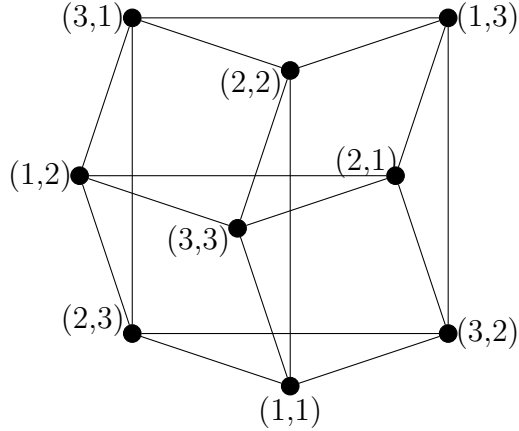

Finally, for the remaining case,
it follows from \Cref{thm:C_prod_Girth_4_if_both_girths_more_than_4} that the eccentric girth of $E(C_n\square C_m)$ is four.

The following statement summarizes the above discussion:
The eccentric girth of the Cartesian product of two cycle graphs is even except when both cycles are triangles. Moreover, 
\begin{equation*}
  \text{Eccentric girth of }E(C_n\square C_m)=\begin{cases}
    0 &\text{if both }n \text{ and } m \text{ are even,}\\
    3 &\text{if }n,m=3,\\
    2m&\text{if }n \text{ is even and } m \text{ is odd,}\\
    4 &\text{otherwise.}\\
    \end{cases}
\end{equation*}

{We will end this section with the following observation.}

\bp
For an odd value of $n$, $E(C_n\square C_n)$ is isomorphic to $C_n\square C_n$.
\ep
\NI\pf  By \Cref{cor: sameEccentricityProduct}, it is enough to show that $C_n\square\; C_n $ is isomorphic to $C_n \times C_n$ for an odd $n$. We assume the natural labelling on the vertices of $C_n$. Now, we define an isomorphism $f$ from $C_n\square\; C_n $  to $C_n \times C_n$ as follows
\begin{align*}
    f\big((1,1)\big)&= (1,1),\\
    f\big((i,1)\big)&= (n+2-i,n+2-i)\text{ for } i=2,\ldots,n,\\
    f\big((i,j)\big)&= \big[f\big((i,1)\big)+(j-1,1-j)\big]\,(\text{mod}\; n).
\end{align*}
We will write $0$ as $n$ in the computation of $f$.
To see $f$ is a bijection, first note that $f\big((i,1)\big)\neq f\big((j,1)\big)$ for $i\neq j$. Now assume $(i,j)\neq (k,l)$, this happens in either of three cases,
$(a)$ $i\neq k$ and $j=l$, $(b)$ $i=k$ and $j\neq l$, or $(c)$ $i\neq k$ and $j\neq l$. 

Consider the first case  $i\neq k$ and $j=l$ and let $f\big((i,1)\big)=(s,s)$ and $f\big((k,1)\big)=(t,t)$, clearly $s\neq t$. Now, if $f\big((i,j)\big)= f\big((k,l)\big)$ this implies $s+j-1 \equiv t+j-1 (\text{mod}\;{n})$ which leads to $s=t$, a contradiction. Therefore $f\big((i,j)\big)\neq  f\big((k,l)\big)$. Similarly, we can show for the second case. Now consider the third case $i\neq k$ and $j\neq l$, and again let $f\big((i,1)\big)=(s,s)$ and $f\big((k,1)\big)=(t,t)$, clearly $s\neq t$. Now, if $f\big((i,j)\big)= f\big((k,l)\big)$ this implies $s+j-1 \equiv t+l-1 (\text{mod}\;{n})$ and  $s+1-j \equiv t+1-l (\text{mod}\;{n})$, compatibility with addition of congruence leads to again $s=t$ (because $n$ is odd), a contradition. Therefore, $f$ is a bijection.

Now, let $(i,j)\in V(C_n \square C_n)$ and $f\big((i,j)\big)= (s,t)$. Then $f\big((i\pm 1,j)\big)=(s\pm 1, t\pm 1)(\text{mod}\;{n})$ and $f\big((i,j\pm 1)\big)=(s\pm 1, t\mp 1)(\text{mod}\;{n})$. This proves that $f$ preserves the adjacency. 
\qed

\section{Invertibilty of eccentricity matrix of the Cartesian product of trees}
\label{sec:Invertibilty of EG}

In this section, we will focus on the invertibility of the eccentric matrix for the Cartesian product of trees. First, recall the definition of the \emph{Kronecker product} of two matrices.
\bd
\label{def:KronProdMat}
Let $A=(a_{i,j})$ be an $m\times n$ matrix and $B=(b_{i,j})$ be a $p\times q$ matrix, then the \emph{Kronecker product}, $A \otimes B$, is an $mp\times nq$ block matrix defined as
\[
\begin{pmatrix}
    b_{11}A &\cdots&b_{1n}A \\ 
    \vdots &\ddots&\vdots \\ 
    b_{m1}A &\cdots&b_{mn}A \\ 
\end{pmatrix}.
\]
\ed
Kronecker product of two matrices is non-commutative in general. If $A$ and $B$ are square matrices of order $n$ and $p$, respectively, then $$\det A\otimes B=(\det A )^p(\det B)^n.$$

\bl
    \label{lem:P2sBoxStar}
    Let $T$ be a tree that is not a star or $P_4$, then the eccentricity matrix of $T\square \underbrace{ P_2\square \cdots \square P_2}_{k-1}$ is not invertible.
\el
\NI\text{\bf Proof:} Let $G=T\square P_2\square\cdots \square P_2$ and the $i^{th}$ graph in this product is the path $P_2$ with endpoints $\{u_i, v_i\}$ for $i=2,\ldots,k$. 
Note that a vertex $(x_1,x_2\ldots,x_k)$ is adjacent to $(u_1,u_2,\ldots,u_k)$ in $E(G)$ if and only if $x_i=v_i$ for $i=2,\ldots,k$ and either $x_1$ is eccentric to $u_1$ in $T_1$ or $u_1$ is eccentric to $x_1$ in $T_1$. In other words, adjacency with $(u_1,u_2,\ldots,u_k)$ in $E(G)$
solely depends on the adjacency of $u_1$ in $E(T_1)$. Now we consider three cases.

\NI \emph{Case 1}: Let the diameter of $T_1$ be $3$ and $P= a_1\,b_1\,c_1\,d_1$ be a diametrical path in $T_1$. As $T_1\neq P_4$, there must be a leaf vertex, say $e_1$, adjacent to either $b_1$ or $c_1$. Let's assume $e_1$ is adjacent to $b_1$. Now we claim that $N_{E(G)}\big((a_1,u_2,\ldots,u_k)\big)=N_{E(G)}\big((e_1,u_2,\ldots,u_k)\big)$. If a vertex $f_1$, is eccentric to $a_1$ then $f_1$ is also eccentric to $e_1$ because $d_{T_1}(a_1,f_1)=d_{T_1}(e_1,f_1)$, and if $a_1$ is eccentric to some vertex $f_1$ then so is $e_1$ because $d_{T_1}(a_1,f_1)=d_{T_1}(e_1,f_1)$. This proves our claim and hence the rows corresponding to these two vertices in $\mathcal{E}_G$ are exactly the same and therefore $\det(\mathcal{E}_G)=0$. 

\NI \emph{Case 2}: Let the diameter of $T_1$ be $4$ and $P= a_1\,b_1\,c_1\,d_1\, e_1$ be a diametrical path in $T_1$. Let $\{b_1,d_1,p_1,\ldots, p_{\ell}\}$ be the set of neighbours of $c_1$.
Note that if a vertex $x$ is eccentric to a neighbour of $c_1$ then it is also eccentric to $c_1$. 
Further, note that none of $c_1$ or its neghbours can be eccentric to any vertex in $T_1$. Therefore, row corresponding to $(c_1,u_2,\ldots,u_k)$ in the matrix $\mathcal{E}_G$ is a constant multiple of the sum of the rows corresponding to $(b_1,u_2,\ldots,u_k),(d_1,u_2,\ldots,u_k), (p_1,u_2,\ldots,u_k),\dots (p_{\ell},u_2,\ldots,u_k)$.

\NI \emph{Case 3}: Let the diameter of $T_1$ be greater than $4$ and $P= a_1\,b_1\,c_1\,d_1\ldots\, z_1$ be a diametrical path in $T_1$. By using similar arguments as in case 1 and case 2, we get the rows corresponding to $(b_1,u_2,\ldots,u_k)$ and $(c_1,u_2,\ldots,u_k)$ in $\mathcal{E}_G$ are constant multiple of each other and hence $\det(\mathcal{E}_G)=0$.
\qed

Now, we will present the main result of this section.
\bt
\label{thm:Inv of prod of Trees}
    Let $T_1, \dots, T_k$ be trees and  $G \,(=T_1\square \cdots \square T_k)$ be their Cartesian product. Then the eccentricity matrix of G, $\mathcal{E}_G$, is invertible if and only if one of them is a star or $P_4$ and the rest are $P_2$.
\et
\NI\text{\bf Proof:}
Let $T_1, \dots, T_k$ be trees with at least two vertices and  $G=T_1\square \cdots \square T_k$.
    Assume that $T_1$ is a star on $n+1$ vertices and $T_2,\dots ,T_k=P_2$. Then the eccentricity matrix of $G
    $ is 
    \[
\mathcal{E}_G=\begin{pmatrix}
    0&k+1 & k+1&\cdots &k+1\\
    k+1& 0&k+2 &\cdots &k+2\\
    \vdots&\vdots & \ddots&\\
    k+1&k+2 &\cdots & 0&k+2\\
    k+1&k+2 &\cdots &k+2 &0
\end{pmatrix} \otimes J_{2^k},
    \]
where, $J_{2^k}$ is a $2^k\times 2^k$ antidiagonal matrix with all antidiagonal entries as 1.\\
\vspace{0.2cm}\\
Note that $\det \begin{pmatrix}
    0&k+1 & k+1&\cdots &k+1\\
    k+1& 0&k+2 &\cdots &k+2\\
    \vdots&\vdots & \ddots&\\
    k+1&k+2 &\cdots & 0&k+2\\
    k+1&k+2 &\cdots &k+2 &0
\end{pmatrix}$ is $(-1)^{n+1}n(k+1)^2(k+2)^{n-1}$, also $\det J_{2^k} \neq 0$. Therefore $\det \mathcal{E}_G\neq 0$.

Now if $T_1=P_4$, then the eccentricity matrix of $G
$ is 
    \[
\mathcal{E}_G=\begin{pmatrix}
    0&0& 2+k &3+k\\
    0& 0&0 &2+k\\
    2+k&0 & 0&0\\
    3+k&2+k&0 &0
\end{pmatrix} \otimes J_{2^k},
    \]
Again, $\det \mathcal{E}_G\neq 0$, as $\det \begin{pmatrix}
    0&0& 2+k &3+k\\
    0& 0&0 &2+k\\
    2+k&0 & 0&0\\
    3+k&2+k&0 &0
\end{pmatrix}=(k+2)^4$.



For the converse part, let $T_1$ be neither a star nor $P_4$. Thus the diameter of $T_1>2$ and let $P=u_1u_2\dots u_s$ be a diametrical path in $T_1$. If each of $T_2,\dots ,T_k$ contains only pendant vertices, then the conclusion follows from \Cref{lem:P2sBoxStar}.
Therefore, we can assume without loss of generality, $T_2$ has a non-pendant vertex $v$.
Now we want to show that $\det \mathcal{E}_{G}$ is zero. This assertion holds if we can show in general $\det \mathcal{E}_{K}$ is zero, where $K$ is the Cartesian product of $T_1$, $T_2$ and a simple connected graph $H$.  
Let $(u_i, v, x)\in V(K)$. Note that $(u_i, v, x)$ cannot be farthest from (and hence, eccentric to) any vertex in $K$ {because $v$ is a non-pendant}. Consequently, only those vertices are adjacent to $(u_i,v,x)$ (in $E(K)$) which are 
eccentric to $(u_i,v,x)$.
Thus,
\begin{equation}
\label{eqn:EccinNonpendantCase}
N_{E(K)}(u_i,v, x)=\{(w_i,w, y): w_i,w, y \text{  are eccentric to } u_i,v,x \text{ respectively} \}.
\end{equation}

Now if any vertex is eccentric to $u_1$ in $T_1$ then the same vertex is eccentric to $u_2$ as well in $T_1$ leading to
\[
N_{E(K)}(u_1,v, x)=
N_{E(K)}(u_2,v,x).
\]
Thus, rows corresponding to $(u_1,v, x)$ in $\mathcal{E}_K$ is a constant multiple of that of $(u_2,v,x)$, proving the non-invertibility of $\mathcal{E}_K$.

\qed

\section*{Acknowledgements}Authors thank Professor Arvind Ayyer for his valuable comments.
The first author thanks the Prime Minister Research Fellowship, India, for the funding. The second author acknowledges the support of the Council of Scientific $\And$ Industrial Research, India (File number: 09/921(0347)/2021-EMR-I). 
\bibliographystyle{siam}
	\bibliography{ecc}

\end{document}